\documentclass[12pt,a4paper]{article}
\usepackage[english]{babel}
\usepackage[utf8]{inputenc}
\usepackage[T1]{fontenc}
\usepackage{amsmath, amssymb, amsthm}
\usepackage{mathrsfs}
\usepackage{amsfonts}
\usepackage[top=2.5cm, bottom=2.5cm, left=2.5cm, right=2.5cm]{geometry}
\usepackage{hyperref}
\usepackage{enumitem}
\usepackage{bm}
\usepackage{indentfirst}

\numberwithin{equation}{section}
\newtheorem{example}{Example}[section]
\theoremstyle{plain}
\newtheorem{theorem}{Theorem}[section]
\newtheorem{lemma}{Lemma}[section]
\newtheorem{proposition}{Proposition}[section]

\theoremstyle{definition}
\newtheorem{definition}{Definition}[section]
\newtheorem{remark}{Remark}[section]

\newcommand{\R}{\mathbb{R}}

\newcommand{\abs}[1]{\left|#1\right|}
\newcommand{\norm}[1]{\left\|#1\right\|}
\newcommand{\eps}{\varepsilon}
\newcommand{\dd}{\,\mathrm{d}}
\newcommand{\J}{\mathcal{J}}
\newcommand{\inner}[2]{\left\langle #1,#2\right\rangle}

\title{{The Brezis–Nirenberg-type problem for critical cooperative and non-cooperative elliptic systems with general subcritical perturbations}
\thanks{Research supported by Natural Science Foundation of Shandong Province (No. ZR2024MA009).}
\author{Guanwei Chen\\
\small \textit{School of Mathematical Sciences, University of Jinan, Jinan 250022, Shandong Province, China}\\
\small E-mail: guanweic@163.com}}
\date{}

\begin{document}
\maketitle

\begin{abstract}
We study the Brezis--Nirenberg-type problem for a class of critical cooperative and
non-cooperative elliptic systems with general subcritical
perturbations. The system couples two equations through a matrix whose
off-diagonal entries may be of either sign, a critical Sobolev-type
nonlinearity, and a perturbation that may change sign.

The main difficulties are threefold: the linear part is governed by a
matrix, the critical nonlinearity couples the two components, and the
perturbation is general and may change sign.

We prove an abstract existence result under the energy threshold
condition and verify this condition in two regimes. For $N\ge4$ the
condition holds for every admissible triple, in both
the unperturbed and the perturbed case; the proof uses a new $L^{q}$ estimate for
the normalized Aubin--Talenti functions, which controls the negative
part of the perturbation. For $N=3$ the deficit in the Sobolev
quotient is only of order $\eps^{1/2}$, so a stronger linear threshold
condition is required; we establish it explicitly for a ball and, for
a general bounded smooth domain, in terms of an abstract constant from
the classical Brezis--Nirenberg theory. We also establish the system
Sobolev inequality with the same constant as in the scalar case.
Since the unperturbed case is included, our results extend the
classical theory to cooperative and non-cooperative critical systems
with general lower-order perturbations.
\end{abstract}

\medskip
\noindent\textbf{Keywords:}
Brezis--Nirenberg-type problems;
cooperative and non-cooperative elliptic systems;
general subcritical perturbations;
energy threshold conditions;
linear threshold conditions.

\medskip
\noindent\textbf{2020 Mathematics Subject Classification:}
35J47; 35J50; 35B33; 35J60; 35A15.

\tableofcontents

\section{Introduction}\label{sec:intro}

\subsection{Background and gaps}

\textbf{Subcritical cooperative and non-cooperative elliptic systems.}
The variational framework for cooperative and non-cooperative elliptic
systems with subcritical nonlinearities was developed by Costa and
Magalh\~aes \cite{Costa1,Costa2}, who exploited the spectral
decomposition of the coupling matrix and the finite-dimensionality of
the negative and null spaces. Subsequent works have considered
subcritical cooperative elliptic systems; see
\cite{Chen,Chen-2,Ma-6,Ma-7,Pomponio,Zou-14,Zou-15,Zou-16,RecovaRumbos2026,ZouLiLiu1999}
and the references therein. For subcritical non-cooperative elliptic
systems, we refer to \cite{Lazzo-4,Li-5,Tang-12}.

\textbf{Critical scalar Brezis--Nirenberg problem.}
The critical case, in which the nonlinearity involves the Sobolev
exponent $2^*=\frac{2N}{N-2}$, is considerably more delicate because
the embedding $H_0^1(\Omega)\hookrightarrow L^{2^*}(\Omega)$ is not
compact. In the scalar setting
\[
-\Delta u=\lambda u+|u|^{2^*-2}u+f(x,u),
\]
this is the classical Brezis--Nirenberg problem
\cite{BrezisNirenberg}, where the key difficulty is the possible loss
of compactness at the critical level $\frac1N S^{N/2}$. Brezis and
Nirenberg showed that if $N\ge4$, a positive solution exists for every
$\lambda\in(0,\lambda_1)$, while for $N=3$ the situation is much more
delicate: when $\Omega$ is a ball, a positive solution exists if and
only if $\lambda\in(\lambda_1/4,\lambda_1)$. Servadei and Valdinoci
\cite{ServadeiValdinoci} extended the theory to the fractional
Laplacian $(-\Delta)^s$, $s\in(0,1)$, obtaining a threshold depending
on $s$ and $N$.

\textbf{Critical Schr\"odinger systems.}
Critical Schr\"odinger systems with nonlinear couplings have been
extensively studied; see
\cite{ChenZou2012, ChenZou2015, ClappSzulkin2019, LiuZou2025,
TavaresYou2020, TavaresYouZou2022,
YinZou2022, YouZou2022} and the
references therein. A related study with Neumann boundary
conditions is \cite{MauroSchieraTavares2026}, where the critical case
is treated only for $N=4$. In all these works the coupling appears only in
the nonlinear terms, the linear part is decoupled, and no lower-order
perturbation is present.

\textbf{The gap.}
In all the above settings, the coupling—when present—appears either
in the nonlinear terms only (Schr\"odinger systems) or is absent
altogether (scalar problem). The Brezis--Nirenberg
problem for a cooperative and non-cooperative system in which the
coupling is already present in the \emph{linear part}, through a full
matrix, in addition to the nonlinear coupling through the critical
term $|U|^{2^*-2}U$, and with a general subcritical perturbation, has
not been treated in the literature.

\subsection{The problem}

Motivated by this gap, we consider the cooperative and non-cooperative elliptic system
\begin{equation}\label{sys1}
\left\{
\begin{array}{ll}
-\Delta u=\lambda u+\delta v+u(u^2+v^2)^{\frac{2^*-2}{2}}+f(x,u,v), & x\in\Omega,\\[2mm]
-\Delta v=\delta u+\gamma v+v(u^2+v^2)^{\frac{2^*-2}{2}}+g(x,u,v), & x\in\Omega,\\[2mm]
u=v=0, & x\in\partial\Omega,
\end{array}
\right.
\end{equation}
where $\Omega\subset\R^N$ is a bounded smooth domain, $N\ge3$,
$2^*=\frac{2N}{N-2}$, $\lambda,\delta,\gamma\in\R$, and $(f,g)=\nabla F$
for some $F\in C^1(\overline\Omega\times\R^2,\R)$.

The system \eqref{sys1} is governed by the coupling matrix
\begin{equation}\label{Astar-def}
A^*=\begin{pmatrix}\lambda & \delta\\ \delta & \gamma\end{pmatrix},
\end{equation}
whose off-diagonal entries are $\delta$. When $\delta\ge0$ the system
is \emph{cooperative}, while
when $\delta<0$ it becomes \emph{non-cooperative}, see \cite{Costa1,Costa2}. Our results cover both
cases; for definiteness, we refer to \eqref{sys1} as a
\emph{cooperative and non-cooperative system}.

The eigenvalues of $A^*$ are as follows:
\begin{equation}\label{eigenvalues}
\xi,\zeta=\frac{\lambda+\gamma}{2}
\pm\sqrt{\left(\frac{\lambda-\gamma}{2}\right)^2+\delta^2}
\end{equation}
govern the linear part of \eqref{sys1}.
We denote by $\lambda_1>0$
the first Dirichlet eigenvalue of $-\Delta$ on $\Omega$ and assume
throughout that
\begin{equation}\label{posdef}
\xi,\zeta<\lambda_1.
\end{equation}
Under \eqref{posdef} the quadratic form
$\|U\|_W^2-\int_\Omega A^*U\cdot U\dd x$ is coercive on
$W=H_0^1(\Omega)\times H_0^1(\Omega)$ (Lemma \ref{lem:positive}), so
the mountain pass theorem applies directly.

The system \eqref{sys1} is invariant under the symmetry
\begin{equation}\label{symmetry}
(u,v)\longmapsto(v,u),\qquad
(\lambda,\delta,\gamma)\longmapsto(\gamma,\delta,\lambda),
\end{equation}
which exchanges the roles of $\lambda$ and $\gamma$ while preserving
the structure of the system.

\subsection{Main difficulties}

Compared with the scalar Brezis--Nirenberg problem and the critical
Schr\"odinger systems mentioned above, the main difficulties of the
present paper are the following.

\smallskip

\textbf{(i) Multiple coupling structure.}
The system \eqref{sys1} exhibits coupling in three places: the
\emph{linear part} through the matrix $A^*$, whose cross term
$2\delta uv$ cannot be absorbed into a scalar operator; the
\emph{critical term} $|U|^{2^*-2}U$, which is not the sum of two
scalar critical terms; and the \emph{perturbation} $\nabla F$, which
in general also couples the two components. In particular, the
positive-definiteness condition \eqref{posdef} is a \emph{matrix}
condition, not a scalar one.

\smallskip

\textbf{(ii) General subcritical perturbation.}
The perturbation $\nabla F$ satisfies only (F1)--(F5); it is neither
homogeneous nor monotone, and may change sign for $N>4$ (for $N=3,4$
we assume $F\ge0$). Consequently, the energy threshold condition
\begin{equation}\label{energy-cond}
\sup_{t\ge0}\J(tU_0)<\frac1N S^{N/2}
\end{equation}
must be verified in the presence of a general perturbation. In the
scalar case, the analogous condition is verified explicitly in
\cite{BrezisNirenberg,ServadeiValdinoci} in the unperturbed case;
here we extend this verification to the cooperative and
non-cooperative system \eqref{sys1}. For $N=3,4$ we assume $F\ge0$,
which directly absorbs the perturbation; for $N>4$ we allow $F$ to
change sign and use a new general $L^q$ estimate for the normalized
Aubin--Talenti functions (Lemma \ref{lem:Lq}) to control the negative
part of $F$.

\smallskip

\textbf{(iii) Dimension three.}
For $N=3$, the deficit in the Sobolev quotient is only of order
$\eps^{1/2}$, so \eqref{energy-cond} does not hold for all admissible
triples and requires a stronger linear threshold condition; this is
the natural system analogue of the classical Brezis--Nirenberg
constant.

\subsection{Assumptions on the perturbation}

We assume that
$F\in C^1(\overline\Omega\times\R^2,\R)$ satisfies the following
conditions:
\begin{enumerate}[label=,leftmargin=*]
\item (F1) $F(x,0)=0$ and $\nabla F(x,0)=0$ for all $x\in\Omega$;
\item (F2) $\displaystyle \sup\{|\nabla F(x,U)|:\text{a.e. }x\in\Omega,\ |U|\le M_0\}<+\infty$ for all $M_0>0$;
\item (F3) $\displaystyle \lim_{|U|\to0}\frac{|\nabla F(x,U)|}{|U|}=0$ uniformly in $x\in\Omega$;
\item (F4) $\displaystyle \lim_{|U|\to+\infty}\frac{|\nabla F(x,U)|}{|U|^{2^*-1}}=0$ uniformly in $x\in\Omega$;
\end{enumerate}

The following
additional condition is used only when verifying the energy threshold condition
\eqref{energy-cond}; it is not needed for the abstract existence
result.

\begin{enumerate}[label=,leftmargin=*]
\item (F5) There exist a constant $C_1>0$ and an exponent $q_1=q_1(N)>1$
such that
\begin{equation}\label{F5}
\begin{cases}
F(x,U)\ge0,\ \forall(x,U)\in\Omega\times\R^2, & N=3,4,\\[1mm]
F(x,U)\ge-C_1|U|^{q_1},
\ q_1\in\left(\dfrac{4}{N-2},\,2\right),\ \forall(x,U)\in\Omega\times\R^2, & N>4.
\end{cases}
\end{equation}
\end{enumerate}

\begin{example}[Examples satisfying (F1)--(F5)]
\label{rem:F5-compatible}
We give the following examples.

\smallskip
\textbf{(a) $N\ge3$: nonnegative case.}
For $N\ge3$, any
\[
F(x,U)=a(x)|U|^{q},\qquad a\in L^\infty(\Omega),\ a\ge0,\ 2<q<2^*,
\]
satisfies (F1)--(F4) and $F\ge0\ge-C_1|U|^{q_1}$ for any $C_1>0$, hence also (F5).

\smallskip
\textbf{(b) $N>4$: sign-changing case.}
For $N>4$, a general sign-changing example is
\[
F(x,U)=-a(x)|U|^{q_1}\chi(|U|)+b(x)|U|^{q_2},
\]
where $a,b\in L^\infty(\Omega)$, $a,b\ge0$,
$q_1\in(\frac{4}{N-2},2)$, $q_2\in(2,2^*)$, and
$\chi\in C^\infty([0,\infty))$ satisfies $\chi=0$ on $[0,1]$,
$\chi=1$ on $[2,\infty)$, $0\le\chi\le1$. It satisfies (F1)--(F4) and (F5) with $F\ge-C_1|U|^{q_1}$ for some $C_1>0$.
\end{example}

\begin{remark}[Sharpness of the exponent lower bound for $N>4$]
\label{rem:F5-sharp}
For $N>4$, the lower bound $q_1>\frac{4}{N-2}$ in (F5) is sharp. Indeed,
by Lemma \ref{lem:Lq}, $\int_\Omega|v_\eps|^{q_1}\dd x=o(\eps)$ if
and only if $q_1>\frac{4}{N-2}$. At the critical value
$q_1=\frac{4}{N-2}$, the perturbation is of order $\eps$, the same as the
unperturbed deficit; for $q_1<\frac{4}{N-2}$, it is of order
$\eps^{(N-2)q_1/4}$ with $(N-2)q_1/4<1$, which is larger than $\eps$.
In both cases the perturbation cannot be absorbed into the deficit,
so the strict inequality in (F5) cannot be relaxed. For $N=3,4$ we
assume $F\ge0$, so no exponent threshold is needed.
\end{remark}

\subsection{The functional framework}
\begin{definition}[The space $W$ and the best Sobolev constant $S$]
\label{def:W}
Let $W:=H_0^1(\Omega)\times H_0^1(\Omega)$. For $U=(u_1,u_2)$ and
$V=(v_1,v_2)$ in $W$, write $|U|:=(u_1^2+u_2^2)^{1/2}$ and
$|\nabla U|^2:=|\nabla u_1|^2+|\nabla u_2|^2$, and endow $W$ with the
inner product
\[
\inner{U}{V}_W=\int_\Omega\nabla U\cdot\nabla V\dd x,
\]
and the corresponding norm
$\|U\|_W^2=\int_\Omega|\nabla U|^2\dd x$. For $u\in H_0^1(\Omega)$ we
use the equivalent norm $\|u\|_{H_0^1}^2:=\int_\Omega|\nabla u|^2\dd x$.
The best constant in the scalar Sobolev embedding
$H_0^1(\Omega)\hookrightarrow L^{2^*}(\Omega)$ is
\[
S:=\inf_{u\in H_0^1(\Omega)\setminus\{0\}}
\frac{\|\nabla u\|_{L^2}^2}{\|u\|_{L^{2^*}}^2}.
\]
\end{definition}

\begin{definition}[The energy functionals $\J$ and
$\J_{(\lambda,\gamma)}$]\label{def:J}
For any $U\in W$ and fixed triple $(\lambda,\delta,\gamma)\in\R^3$,
define the energy functional $\J:W\to\R$ by
\begin{equation}\label{J-intro}
\J(U)=\frac12\norm{U}_W^2
-\frac12\int_\Omega A^*U\cdot U\dd x
-\frac{1}{2^*}\int_\Omega|U|^{2^*}\dd x
-\int_\Omega F(x,U)\dd x.
\end{equation}
For each pair $(\lambda,\gamma)\in\R^2$ and fixed $\delta\in\R$, we
write $\mathcal{J}_{(\lambda,\gamma)}$ for the functional
\eqref{J-intro} with the corresponding entries in $A^*$. Explicitly,
for $U=(u,v)$,
\begin{equation}\label{quadratic-expansion}
\int_\Omega A^*U\cdot U\dd x
=\lambda\int_\Omega u^2\dd x+2\delta\int_\Omega uv\dd x
+\gamma\int_\Omega v^2\dd x.
\end{equation}
\end{definition}

\subsection{Main results}

Our first result is abstract: it reduces the existence problem to the
verification of the energy threshold condition \eqref{energy-cond}.

\begin{theorem}[Abstract result]\label{thm:abstract}
Let $N\ge 3$. Assume \eqref{posdef} and (F1)--(F4). Suppose there
exists $U_0\in W\setminus\{0\}$ such that \eqref{energy-cond} holds.
Then \eqref{sys1} admits at least one nontrivial weak solution
$U\in W$.
\end{theorem}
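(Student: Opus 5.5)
We follow the classical Brezis--Nirenberg scheme: the mountain pass theorem without the Palais--Smale condition, combined with a local compactness result below the level $\frac1N S^{N/2}$.

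\textbf{Mountain pass geometry.} By \eqref{posdef} and Lemma \ref{lem:positive}, there is $\mu>0$ with
\[
\|U\|_W^2-\int_\Omega A^*U\cdot U\dd x\ge \mu\|U\|_W^2 .
\]
Indeed, diagonalizing $A^*$ pointwise gives $\int A^*U\cdot U\le \max\{\xi,\zeta,0\}\int|U|^2$, and then Poincaré applies. From (F3)--(F4), for every $\eta>0$ we have $|F(x,U)|\le \eta|U|^2+C_\eta|U|^{2^*}$. The Sobolev inequality $\|\,|U|\,\|_{2^*}^2\le S^{-1}\|U\|_W^2$ holds, since $|\nabla|U||\le|\nabla U|$. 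Together these give $\J(U)\ge \frac{\mu}{4}\rho^2-C\rho^{2^*}>0$ on the sphere $\|U\|_W=\rho$ for $\rho$ small. On the other side, $\J(tU_0)\to-\infty$ as $t\to\infty$, because $-\frac{t^{2^*}}{2^*}\int|U_0|^{2^*}$ dominates and $F$ is $o(|U|^{2^*})$. Setting $\Gamma=\{\sigma\in C([0,1],W):\sigma(0)=0,\ \sigma(1)=T U_0\}$ with $T$ large, we get
\[
0<\alpha\le c:=\inf_{\Gamma}\max \J\le\sup_{t\ge0}\J(tU_0)<\tfrac1N S^{N/2}
\]
by \eqref{energy-cond}. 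The mountain pass theorem in Ekeland's form without (PS) then yields a sequence $(U_n)$ with $\J(U_n)\to c$ and $\J'(U_n)\to0$.

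\textbf{Boundedness and weak limit.} We compute
\[
\J(U_n)-\tfrac12\langle \J'(U_n),U_n\rangle=\tfrac1N\int|U_n|^{2^*}+\int\big(\tfrac12\nabla F\cdot U_n-F\big).
\]
By (F3)--(F4), $|\frac12\nabla F\cdot U-F|\le \eta|U|^{2^*}+C_\eta$ (with $\eta<\frac1{2N}$), which uses boundedness on compacts and the bounded domain. Hence $\int|U_n|^{2^*}\le C+o(1)\|U_n\|$. From $\langle \J'(U_n),U_n\rangle=o(\|U_n\|)$ and coercivity we then get $\mu\|U_n\|^2\le C+C\|U_n\|$, so $(U_n)$ is bounded. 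Passing to a subsequence, $U_n\rightharpoonup U$ in $W$, $U_n\to U$ in $L^p$ for $p<2^*$, and $U_n\to U$ a.e. The subcritical growth of $\nabla F$ and the weak $L^{(2^*)'}$ convergence of $|U_n|^{2^*-2}U_n$ give $\J'(U)=0$. Thus $U$ is a weak solution, and it remains to show $U\ne0$.

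\textbf{Nontriviality (main step).} Suppose $U=0$. Then $\int A^*U_n\cdot U_n\to0$, and $\int F(x,U_n),\ \int\nabla F(x,U_n)\cdot U_n\to0$. The latter follows by splitting with $\eta|U|^{2^*}+C_\eta|U|^2$-type bounds (from (F3)--(F4)) and compactness into $L^2$. Setting $\ell=\lim\|U_n\|_W^2$ (along a subsequence), $\langle \J'(U_n),U_n\rangle\to0$ gives $\int|U_n|^{2^*}\to\ell$. The Sobolev inequality for $|U_n|$ then gives $\ell\ge S\ell^{2/2^*}$, so either $\ell=0$ or $\ell\ge S^{N/2}$. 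In addition, $c=\lim \J(U_n)=(\frac12-\frac1{2^*})\ell=\frac{\ell}{N}$. The case $\ell=0$ contradicts $c\ge\alpha>0$, and the case $\ell\ge S^{N/2}$ contradicts $c<\frac1NS^{N/2}$. Hence $U\ne0$.

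The delicate point is the careful uniform handling of $F$ via (F3)--(F4) in the limits, in particular $\int F(x,U_n)\to0$ when only weak convergence is available. We plan to do this with the $\eta$-splitting, using boundedness of $\int|U_n|^{2^*}$ together with strong $L^2$ convergence.
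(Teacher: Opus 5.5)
Your proposal is correct and follows the paper's proof essentially step for step. You use mountain pass geometry from coercivity and the growth bounds, obtain a Palais--Smale sequence at a level $c\in(0,\frac1N S^{N/2})$ from the mountain pass theorem without (PS), prove boundedness via $\J(U_n)-\frac12\langle\J'(U_n),U_n\rangle$, pass to a critical weak limit, and exclude $U=0$ through the dichotomy $\ell=0$ or $\ell\ge S^{N/2}$ together with $c=\ell/N$; the paper packages this last step as Lemma~\ref{lem:cc-system} and Proposition~\ref{PS-range}, and your $|U|^2$/$L^2$ splitting for the $F$-terms, where the paper uses $|U|$/$L^1$, is only a cosmetic difference.
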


We then verify \eqref{energy-cond} in two regimes: $N\ge4$ and $N=3$.  

\smallskip
\smallskip
\textbf{(I) the case $N\ge4$, $F\equiv0$ (unperturbed).}

\begin{theorem}[Case $N\ge4$, $F\equiv0$]\label{thm:N4-F0}
Let $N\ge4$ and $F\equiv0$. Assume \eqref{posdef} and
$\max(\lambda,\gamma)>0$. Then \eqref{energy-cond} holds, and
consequently \eqref{sys1} admits at least one nontrivial weak solution
$U\in W$.
\end{theorem}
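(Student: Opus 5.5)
By the symmetry \eqref{symmetry} we may assume $\lambda=\max(\lambda,\gamma)>0$. Note that \eqref{symmetry} leaves \eqref{posdef} unchanged, since the eigenvalues \eqref{eigenvalues} are symmetric in $\lambda,\gamma$, and it leaves the case $F\equiv0$ unchanged. The plan is to test $\J$ along a single-component direction $U_0=(v_\eps,0)$. Here $v_\eps=u_\eps/\|u_\eps\|_{L^{2^*}}$, and $u_\eps$ is the standard cut-off Aubin--Talenti function centred at some $x_0\in\Omega$, with cut-off $\varphi\in C_c^\infty(\Omega)$ equal to $1$ near $x_0$.

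For $U=(u,0)$ we have $|U|=|u|$, and both the cross term $2\delta\int uv$ and the term $\gamma\int v^2$ in \eqref{quadratic-expansion} vanish. The functional therefore reduces exactly to the scalar Brezis--Nirenberg functional
\[
\J(t(v_\eps,0))=\frac{t^2}{2}\Big(\|\nabla v_\eps\|_{L^2}^2-\lambda\|v_\eps\|_{L^2}^2\Big)-\frac{t^{2^*}}{2^*}.
\]
Note that \eqref{posdef} gives $\lambda<\lambda_1$, because $\lambda\le\xi<\lambda_1$. Write $Q_\eps:=\|\nabla v_\eps\|_{L^2}^2-\lambda\|v_\eps\|_{L^2}^2$. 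Once we know $Q_\eps>0$, maximizing in $t$ gives
\[
\sup_{t\ge0}\J(tU_0)=\frac1N Q_\eps^{N/2}.
\]
So \eqref{energy-cond} is equivalent to $Q_\eps<S$.

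Next we recall the classical Brezis--Nirenberg estimates as $\eps\to0$:
\[
\|\nabla v_\eps\|_{L^2}^2=S+O(\eps^{(N-2)/2}),
\]
and
\[
\|v_\eps\|_{L^2}^2\ge
\begin{cases}
c\,\eps+O(\eps^{(N-2)/2}), & N\ge5,\\
c\,\eps|\log\eps|+O(\eps), & N=4,
\end{cases}
\]
with $c>0$. Hence, since $\lambda>0$,
\[
Q_\eps\le
\begin{cases}
S-\lambda c\,\eps+O(\eps^{(N-2)/2}), & N\ge5,\\
S-\lambda c\,\eps|\log\eps|+O(\eps), & N=4.
\end{cases}
\]
For $N\ge5$ we have $(N-2)/2>1$, and for $N=4$ the term $\eps|\log\eps|$ dominates $\eps$. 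In both cases $Q_\eps<S$ for $\eps$ small. Positivity of $Q_\eps$ follows from $\lambda<\lambda_1$ and the Poincaré inequality, so the sup formula is legitimate and \eqref{energy-cond} holds with $U_0=(v_\eps,0)\ne0$. Theorem \ref{thm:abstract}, with (F1)--(F4) trivially true for $F\equiv0$, then gives the nontrivial weak solution.

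The main obstacle is the precise asymptotics of the gradient and $L^2$ norms of $v_\eps$ in the two regimes $N\ge5$ and $N=4$; the logarithmic case $N=4$ is the delicate one. I expect these to be quoted from \cite{BrezisNirenberg}, or derived from the paper's own estimates on normalized Aubin--Talenti functions (Lemma \ref{lem:Lq} with $q=2$), rather than recomputed. A smaller point is to make sure the normalization of $S$ in Definition \ref{def:W} matches the one used in those estimates, so that the threshold $\frac1N S^{N/2}$ comes out exactly.
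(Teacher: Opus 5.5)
Your proposal is correct and follows the paper's proof essentially step for step: reduce via \eqref{symmetry} to $\lambda>0$, test with $U_\eps=(v_\eps,0)$ so that the cross and $\gamma$ terms vanish, use the Brezis--Nirenberg asymptotics (Lemma \ref{lem:classical}) to get $A_\eps=\|\nabla v_\eps\|_{L^2}^2-\lambda\|v_\eps\|_{L^2}^2<S$ for small $\eps$, maximize $\frac{t^2}{2}A_\eps-\frac{t^{2^*}}{2^*}$ to obtain $\frac1N A_\eps^{N/2}<\frac1N S^{N/2}$, and then invoke Theorem \ref{thm:abstract}. The one thing to drop is the aside about Lemma \ref{lem:Lq} with $q=2$: that lemma covers only $N>4$ and $q_1<2$, and its conclusion is an $o(\eps)$ upper bound (false at $q=2$), whereas you need a lower bound on $\|v_\eps\|_{L^2}^2$ of order $\eps$ (resp.\ $\eps|\log\eps|$ for $N=4$), which is what \eqref{est-L2}, quoted from \cite[Lemma 1.1]{BrezisNirenberg}, supplies.
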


\smallskip
\textbf{(II) the case $N\ge4$, $F\not\equiv0$ (perturbed).}

\begin{theorem}[Case $N\ge4$, $F\not\equiv0$]\label{thm:N4-Fnot0}
Let $N\ge4$ and $F\not\equiv0$. Assume \eqref{posdef},
$\max(\lambda,\gamma)>0$ and (F1)--(F5). Then \eqref{energy-cond}
holds, and consequently \eqref{sys1} admits at least one nontrivial
weak solution $U\in W$.
\end{theorem}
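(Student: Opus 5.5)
By the symmetry \eqref{symmetry} we may assume $\lambda=\max(\lambda,\gamma)>0$. We take the test function $U_\eps=(u_\eps,0)$, where $u_\eps=\varphi v_\eps$ is the normalized Aubin--Talenti bubble, cut off by $\varphi\in C_c^\infty(\Omega)$ with $\varphi\equiv1$ near a point $x_0\in\Omega$. Theorem \ref{thm:abstract} then reduces everything to showing $\sup_{t\ge0}\J(tU_\eps)<\frac1N S^{N/2}$ for small $\eps$.

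With the second component zero, the cross term $2\delta uv$ vanishes and $|U_\eps|^{2^*}=|u_\eps|^{2^*}$. Hence
\[
\J(tU_\eps)=\frac{t^2}{2}\Big(\|\nabla u_\eps\|_2^2-\lambda\|u_\eps\|_2^2\Big)-\frac{t^{2^*}}{2^*}\|u_\eps\|_{2^*}^{2^*}-\int_\Omega F(x,tu_\eps,0)\dd x .
\]
We use the classical Brezis--Nirenberg estimates:
\[
\|\nabla u_\eps\|_2^2=S^{N/2}+O(\eps^{(N-2)/2}),\qquad \|u_\eps\|_{2^*}^{2^*}=S^{N/2}+O(\eps^{N/2}),
\]
\[
\|u_\eps\|_2^2\ge c\,\eps \ (N\ge5),\qquad \|u_\eps\|_2^2\ge c\,\eps|\log\eps| \ (N=4).
\]
Since $\lambda>0$, the quotient $(\|\nabla u_\eps\|^2-\lambda\|u_\eps\|_2^2)/\|u_\eps\|_{2^*}^2$ falls below $S$ by a margin $\ge c\lambda\eps$ (times $|\log\eps|$ if $N=4$), which dominates the $O(\eps^{(N-2)/2})$ error. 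In the unperturbed case ($F\equiv0$), maximizing the quadratic-minus-power function in $t$ gives $\frac1N$ times that quotient to the power $N/2$, strictly below $\frac1N S^{N/2}$.

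For $F\not\equiv0$ we argue as follows.
\begin{itemize}
\item \textbf{Bounding the maximizer.} By (F3)--(F4), $|F(x,U)|\le \frac14|U|^2\lambda_1^{\phantom{}}\cdot\tfrac{1}{2}+C|U|^{2^*}$. So for small $\eps$ the function $t\mapsto\J(tU_\eps)$ is negative for $t$ large and small for $t$ small, uniformly in $\eps$. The supremum is therefore attained at some $t_\eps\in[t_0,T_0]$ with $0<t_0<T_0$ independent of $\eps$. This uses \eqref{posdef} to make the quadratic part coercive, together with the uniform bounds on the norms of $u_\eps$.
\item \textbf{Absorbing $F$.} Then
\[
\sup_t\J(tU_\eps)\le \sup_t\Big[\tfrac{t^2}{2}(\cdots)-\tfrac{t^{2^*}}{2^*}(\cdots)\Big]-\int_\Omega F(x,t_\eps u_\eps,0)\dd x .
\]
For $N=4$, (F5) gives $F\ge0$, so the last term is $\le0$ and we conclude as in the unperturbed case. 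For $N>4$, (F5) gives
\[
-\int_\Omega F(x,t_\eps u_\eps,0)\dd x\le C_1T_0^{q_1}\int_\Omega|u_\eps|^{q_1}\dd x .
\]
By Lemma \ref{lem:Lq}, and since $q_1>\frac4{N-2}$, this is $o(\eps)$ (using $|u_\eps|\le|v_\eps|$). The gain of order $c\lambda\eps$ absorbs it, which gives the strict inequality.
\end{itemize}

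\textbf{Main obstacle.} The delicate step is keeping all error terms strictly below the order-$\eps$ gain.
\begin{itemize}
\item The first-order expansion of $\frac1N(\text{quotient})^{N/2}$ must produce a negative term of size $\gtrsim\lambda\eps$ (respectively $\lambda\eps|\log\eps|$ for $N=4$).
\item That term has to beat both $O(\eps^{(N-2)/2})$ and the $L^{q_1}$ contribution $o(\eps)$.
\item The bounds $t_0,T_0$ must be independent of $\eps$, since $F$ is not homogeneous.
\end{itemize}
The precise $L^{q}$ estimate of Lemma \ref{lem:Lq} is what makes the last comparison work, and the borderline $N=5$ requires checking that $\eps^{3/2}=o(\eps)$, which holds.
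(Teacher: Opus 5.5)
Your proposal is correct and follows essentially the paper's proof. The common steps are the symmetry reduction to $\lambda>0$, the test function with zero second component, and $F\ge0$ for $N=4$. For $N>4$ both arguments combine an $\eps$-independent bound $t\le T_0$ with (F5) and Lemma \ref{lem:Lq}, so that $-\int_\Omega F$ is an $o(\eps)$ error absorbed by the $\lambda C\eps$ gain; normalizing by the gradient rather than in $L^{2^*}$ is immaterial.

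One minor slip: the displayed bound $|F|\le c|U|^2+C|U|^{2^*}$ does not by itself make $\J(tU_\eps)$ negative for large $t$ when $C\ge 1/2^*$. Use instead \eqref{est2} with a small coefficient on $|U|^{2^*}$, or the lower bound in (F5). The lower bound $t_0$ is never used.
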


\begin{remark}[Compatibility of \eqref{posdef} and
$\max(\lambda,\gamma)>0$]
\label{rem:nonempty-N4}
For $N\ge4$, the assumptions \eqref{posdef} and
$\max(\lambda,\gamma)>0$ are compatible. Indeed, for
$|\delta|<\lambda_1$, choose $\lambda_0$ with
$0<\lambda_0<\lambda_1-|\delta|$, and set
$\lambda=\gamma=\lambda_0$. The eigenvalues in \eqref{eigenvalues}
are $\xi=\lambda_0+|\delta|$ and $\zeta=\lambda_0-|\delta|$, both $<\lambda_1$, and
$\max(\lambda,\gamma)=\lambda_0>0$.
\end{remark}

\smallskip
\textbf{(III) the case $N=3$, $F\ge0$ (unperturbed or perturbed).}
Here the condition $\max(\lambda,\gamma)>0$ used in $N\ge4$ no longer suffices: a
stronger linear threshold condition
$\max(\lambda,\gamma)>\lambda_{\mathrm{BN}}$ is required, together
with (F1)--(F5) and $F\ge0$.

\begin{definition}[The Brezis--Nirenberg constant $\lambda_{\mathrm{BN}}$]
\label{def:BN-constant}
For $N=3$:
\begin{itemize}
\item[(a)] If $\Omega$ is a ball, let
$\lambda_{\mathrm{BN}}:=\lambda_1/4$, where $\lambda_1$ is given in \eqref{posdef}.
\item[(b)] If $\Omega$ is a general bounded smooth domain, let
$\lambda_{\mathrm{BN}}:=\lambda^*$ be the abstract constant in
\cite[Corollary 1.1]{BrezisNirenberg}, satisfying
$0<\lambda_{\mathrm{BN}}<\lambda_1$.
\end{itemize}
\end{definition}

\begin{theorem}[Case $N=3$, $F\ge0$]\label{thm:N3}
Let $N=3$ and $F\ge0$. Assume \eqref{posdef},
$\max(\lambda,\gamma)>\lambda_{\mathrm{BN}}$ and (F1)--(F5), where
$\lambda_{\mathrm{BN}}$ is as in Definition \ref{def:BN-constant}.
Then \eqref{energy-cond} holds, and consequently \eqref{sys1} admits
at least one nontrivial weak solution $U\in W$.
\end{theorem}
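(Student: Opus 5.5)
By Theorem \ref{thm:abstract}, it suffices to exhibit $U_0\in W\setminus\{0\}$ satisfying \eqref{energy-cond}. I would reduce the system to a scalar Brezis--Nirenberg problem by choosing $U_0$ supported on a single component. By the symmetry \eqref{symmetry} we may assume $\lambda=\max(\lambda,\gamma)>\lambda_{\mathrm{BN}}$. Take $U_0=(u_0,0)$ with $u_0\in H_0^1(\Omega)\setminus\{0\}$. Then \eqref{quadratic-expansion} gives $\int_\Omega A^*U_0\cdot U_0\dd x=\lambda\int_\Omega u_0^2\dd x$, and the cross term $2\delta uv$ and the coupling in $|U|^{2^*}$ both disappear, since $|U_0|=|u_0|$. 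Because $F\ge0$, for every $t\ge0$
\[
\J(tU_0)\le \frac{t^2}{2}\Big(\int_\Omega|\nabla u_0|^2\dd x-\lambda\int_\Omega u_0^2\dd x\Big)-\frac{t^{2^*}}{2^*}\int_\Omega|u_0|^{2^*}\dd x=:\Phi_\lambda(tu_0),
\]
where $\Phi_\lambda$ is the scalar Brezis--Nirenberg functional with parameter $\lambda$. It therefore suffices to find $u_0$ with $\sup_{t\ge0}\Phi_\lambda(tu_0)<\frac13S^{3/2}$.

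\emph{Scalar maximization.} Put $Q_\lambda(u):=\big(\|\nabla u\|_2^2-\lambda\|u\|_2^2\big)/\|u\|_{2^*}^2$. Whenever the numerator is positive, an elementary one-variable computation gives $\sup_{t\ge0}\Phi_\lambda(tu)=\frac1N Q_\lambda(u)^{N/2}$. The numerator is positive for every $u\neq0$ because $\lambda<\lambda_1$; indeed the first diagonal entry satisfies $\lambda\le\xi<\lambda_1$ by \eqref{posdef}. The target is thus reduced to finding $u_0$ with $Q_\lambda(u_0)<S$, which is exactly the condition Brezis--Nirenberg verify.

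\emph{Case (b), general domain.} By \cite[Corollary 1.1]{BrezisNirenberg}, the constant $\lambda^*$ is characterized by the property that $S_\lambda:=\inf Q_\lambda<S$ for $\lambda\in(\lambda^*,\lambda_1)$. For our $\lambda$ this yields some $u_0$ with $Q_\lambda(u_0)<S$.

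\emph{Case (a), ball.} Here I would reproduce the Brezis--Nirenberg test function $u_\eps(x)=\varphi(|x|)/(\eps+|x|^2)^{1/2}$ with $\varphi(r)=\cos(\pi r/2R)$ on $B_R$, for which $\lambda_1=\pi^2/4R^2$. The expansions are
\[
\|\nabla u_\eps\|_2^2=K_1\eps^{-1/2}+\omega\!\int_0^R|\varphi'|^2\dd r+O(\eps^{1/2}),\qquad
\|u_\eps\|_2^2=\omega\!\int_0^R\varphi^2\dd r+O(\eps^{1/2}),
\]
\[
\|u_\eps\|_{6}^2=K_2\eps^{-1/2}+O(\eps^{1/2}),
\]
with $K_1/K_2=S$. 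They give
\[
Q_\lambda(u_\eps)=S+C\eps^{1/2}\Big(\int_0^R|\varphi'|^2\dd r-\lambda\int_0^R\varphi^2\dd r\Big)+O(\eps),\qquad C>0.
\]
The bracket equals $\frac{R}{2}\big(\frac{\pi^2}{4R^2}-\lambda\big)$, which is negative exactly when $\lambda>\lambda_1/4$. Hence $Q_\lambda(u_\eps)<S$ for small $\eps$.

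\emph{Main obstacle.} The only delicate point is this $\eps^{1/2}$ bookkeeping in dimension three, which I would cite or quote from \cite{BrezisNirenberg} rather than redo. Dropping $F$ is harmless only because $F\ge0$ enters $\J$ with a minus sign. Finally, the strict inequality passes from $\Phi_\lambda$ to $\J$, so Theorem \ref{thm:abstract} applies.
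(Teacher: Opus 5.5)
Your proposal is correct and follows essentially the same route as the paper. You reduce by symmetry to $\lambda=\max(\lambda,\gamma)$, test with $(u_0,0)$, drop $F\ge0$, and maximize the scalar fibering map to get $\frac13 Q_\lambda(u_0)^{3/2}$. You then get $Q_\lambda(u_0)<S$ from the truncated bubble $\cos(\pi r/2R)/(\eps+r^2)^{1/2}$ on the ball, and from Brezis--Nirenberg's Corollary 1.1 on a general domain. Your remark that $\lambda\le\xi<\lambda_1$ makes the numerator positive is a detail the paper leaves implicit.

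One slip to fix: in dimension three $\lambda_1(B_R)=\pi^2/R^2$, not $\pi^2/4R^2$. With the correct value, $\pi^2/(4R^2)=\lambda_1/4$, so your bracket is negative exactly when $\lambda>\lambda_1/4$, as you conclude.
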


\begin{remark}[Compatibility of \eqref{posdef} and
$\max(\lambda,\gamma)>\lambda_{\mathrm{BN}}$]
\label{rem:nonempty}
Let $N=3$. The assumptions \eqref{posdef} and
$\max(\lambda,\gamma)>\lambda_{\mathrm{BN}}$ are compatible. Indeed,
by Definition \ref{def:BN-constant}, $\lambda_{\mathrm{BN}}<\lambda_1$.
Hence, provided $|\delta|<\lambda_1-\lambda_{\mathrm{BN}}$, we may
choose $\lambda_0$ with
$\lambda_{\mathrm{BN}}<\lambda_0<\lambda_1-|\delta|$. Setting
$\lambda=\gamma=\lambda_0$, the eigenvalues in \eqref{eigenvalues}
are $\xi=\lambda_0+|\delta|$ and $\zeta=\lambda_0-|\delta|$, both
strictly less than $\lambda_1$, so \eqref{posdef} holds, and
$\max(\lambda,\gamma)=\lambda_0>\lambda_{\mathrm{BN}}$.
\end{remark}

The following table summarizes our main results.

\begin{center}
\small
\renewcommand{\arraystretch}{1.6}
\begin{tabular}{|c|c|c|c|c|}
\hline
$N$ & $F$ & Domain $\Omega$ & Condition & Reference \\
\hline
$\ge4$ & $F\equiv0$ & any bounded smooth & $\max(\lambda,\gamma)>0$ & Thm \ref{thm:N4-F0} \\
$\ge4$ & (F1)--(F5), $F\not\equiv0$ & any bounded smooth & $\max(\lambda,\gamma)>0$ & Thm \ref{thm:N4-Fnot0} \\
\hline
$=3$ & (F1)--(F5), $F\ge0$ & any bounded smooth & $\max(\lambda,\gamma)>\lambda_{\mathrm{BN}}$ & Thm \ref{thm:N3} \\
\hline
\multicolumn{5}{|c|}{All results via Theorem \ref{thm:abstract} (abstract existence under \eqref{energy-cond}).} \\
\hline
\end{tabular}
\end{center}

\subsection{Novelty and technical approach}

The main novelties of the paper are the following.

\smallskip

\textbf{(i) Abstract existence under \eqref{energy-cond}
(Theorem \ref{thm:abstract}).}
The functional $\J$ does not satisfy the Palais--Smale condition
globally, since $W\hookrightarrow L^{2^*}\times L^{2^*}$ is not
compact. We prove a nonvanishing result in the energy range
$c<\frac1N S^{N/2}$ (Proposition \ref{PS-range}), based on a system
concentration-compactness lemma (Lemma \ref{lem:cc-system}) and the
energy identity $c=\frac1N L$, $L\ge S^{N/2}$, which excludes the
vanishing of the weak limit. No sign or monotonicity condition such
as $F_{uv}\ge0$ is imposed.

\smallskip

\textbf{(ii) The case $N\ge4$ (Theorems \ref{thm:N4-F0} and
\ref{thm:N4-Fnot0}).}
For $N=4$ we assume $F\ge0$. For $N>4$, the perturbation
$F\not\equiv0$ is allowed to change sign, and we use the general
$L^q$ estimate
\[
\int_\Omega|v_\eps|^{q}\dd x=o(\eps),\qquad \frac{4}{N-2}<q<2,
\]
which controls the negative part of $F$ and yields
\[
\sup_{t\ge0}\J(tU_\eps)\le\frac1N S^{N/2}-c_2\eps+o(\eps)
<\frac1N S^{N/2}.
\]

\smallskip

\textbf{(iii) The case $N=3$ (Theorem \ref{thm:N3}).}
For $N=3$ the deficit in the Sobolev quotient is only of order
$\eps^{1/2}$, so the energy threshold condition \eqref{energy-cond}
does not hold for all $\max(\lambda,\gamma)>0$. We assume $F\ge0$
and verify \eqref{energy-cond} under the linear threshold condition
$\max(\lambda,\gamma)>\lambda_{\mathrm{BN}}$, where
$\lambda_{\mathrm{BN}}$ is defined as follows: for $\Omega$ a ball,
$\lambda_{\mathrm{BN}}=\lambda_1/4$ by an explicit computation with
the truncated Aubin--Talenti function; for a general bounded smooth
domain, $\lambda_{\mathrm{BN}}=\lambda^*$, the abstract constant in
\cite[Corollary 1.1]{BrezisNirenberg}.

\smallskip

\textbf{(iv) System-specific tools.}
The system Sobolev inequality with the same constant $S$ as in the
scalar case (Lemma \ref{lem:sys-sobolev}) and the test function
$U_\eps=(v_\eps,0)$, which kills the cross term while preserving the
system structure.

\smallskip

\textbf{(v) Outcome.}
The paper provides: an abstract existence result under
\eqref{energy-cond}; a complete verification of \eqref{energy-cond}
for $N\ge4$ in both the unperturbed and perturbed cases; for $N=3$,
the case $F\ge0$ governed by the linear threshold
$\max(\lambda,\gamma)>\lambda_{\mathrm{BN}}$; the system Sobolev
inequality with the scalar constant $S$; and a general $L^q$ estimate
for normalized Aubin--Talenti functions. Since $F\equiv0$ is allowed by (F1)--(F5), the unperturbed critical
case is included as a special case of the perturbed one. Hence the
present results strictly extend the scalar theory and fill the gap in
the study of critical exponent problems for cooperative and
non-cooperative elliptic systems.

\medskip

\medskip
The paper is organized as follows. Section \ref{sec:framework}
introduces the variational framework and growth estimates. Section
\ref{sec:sobolev} establishes the system Sobolev inequality and the
concentration-compactness lemma. Section \ref{sec:ps} proves the
nonvanishing result in the energy range $c<\frac1N S^{N/2}$, and
Section \ref{sec:mp} establishes the mountain pass geometry. The abstract existence result (Theorem \ref{thm:abstract}) is proved
in Section \ref{sec:abstract}. Sections \ref{sec:N4} and
\ref{sec:N3} verify \eqref{energy-cond} for $N\ge4$ and $N=3$,
respectively, and thereby establish Theorems \ref{thm:N4-F0},
\ref{thm:N4-Fnot0}, and \ref{thm:N3}.

\section{Variational framework}\label{sec:framework}

Recall from Definition \ref{def:W} the Hilbert space $W=H_0^1(\Omega)\times H_0^1(\Omega)$ with inner product $\inner{\cdot}{\cdot}_W$, norm $\norm{\cdot}_W$, and the notation $|U|=(u_1^2+u_2^2)^{1/2}$.

\begin{remark}[Embeddings of $W$]\label{rem:W-embeddings}
By the Sobolev embedding theorem, the embedding
$W\hookrightarrow L^s(\Omega)\times L^s(\Omega)$ is continuous for
all $1\le s\le 2^*$. Moreover, by the Rellich--Kondrachov theorem,
it is compact for $1\le s<2^*$, but not for $s=2^*$.
\end{remark}

Under (F1)--(F4), $\J$ in \eqref{J-intro} is $C^1$ with
\begin{equation}\label{deriv}
\inner{\J'(U)}{V}
=\inner{U}{V}_W
-\int_\Omega A^*U\cdot V\dd x
-\int_\Omega\abs{U}^{2^*-2}U\cdot V\dd x
-\int_\Omega\nabla F(x,U)\cdot V\dd x,
\end{equation}
for all $U,V\in W$. Critical points of $\J$ are weak solutions of \eqref{sys1}. For each $(\lambda,\gamma)$, the functional $\J_{(\lambda,\gamma)}$ (recall Definition \ref{def:J}) differs from $\J$ only in the diagonal entries of $A^*$.

\begin{lemma}[Coercivity of the linear part]\label{lem:positive}
Under \eqref{posdef} there exists $c_0>0$ such that
\begin{equation}\label{coercive}
\norm{U}_W^2-\int_\Omega A^*U\cdot U\dd x\ge c_0\norm{U}_W^2,\quad U\in W.
\end{equation}
\end{lemma}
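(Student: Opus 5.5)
The plan is to diagonalize the symmetric matrix $A^*$ pointwise and then use the scalar Poincaré inequality $\int_\Omega|\nabla w|^2\dd x\ge\lambda_1\int_\Omega w^2\dd x$, valid for every $w\in H_0^1(\Omega)$.

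First, the algebraic bound. Since $A^*$ is real symmetric with eigenvalues $\xi\ge\zeta$ given by \eqref{eigenvalues}, we have for every $z\in\R^2$
\[
A^*z\cdot z\le \xi|z|^2\le \mu|z|^2,\qquad \mu:=\max(\xi,0).
\]
By \eqref{posdef}, $\mu<\lambda_1$ (both in the case $\xi\ge0$ and in the case $\xi<0$, where $\mu=0<\lambda_1$). Applying this pointwise with $z=U(x)$ and integrating gives
\[
\int_\Omega A^*U\cdot U\dd x\le \mu\int_\Omega |U|^2\dd x .
\]

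Second, the Poincaré step. Apply the scalar Poincaré inequality to each component $u_1,u_2$ and add, which yields $\int_\Omega|U|^2\dd x\le \lambda_1^{-1}\norm{U}_W^2$. Combining this with the first step,
\[
\norm{U}_W^2-\int_\Omega A^*U\cdot U\dd x\ge\Bigl(1-\frac{\mu}{\lambda_1}\Bigr)\norm{U}_W^2 .
\]
So \eqref{coercive} holds with $c_0:=1-\mu/\lambda_1\in(0,1]$.

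There is no real obstacle. The only point needing care is the sign of $\xi$. If $\xi<0$, the inequality $\xi\int_\Omega|U|^2\dd x\le \xi\lambda_1^{-1}\norm{U}_W^2$ would go the wrong way, which is exactly why $\xi$ is replaced by $\mu=\max(\xi,0)$ above. Note also that only $\xi<\lambda_1$ is actually used; the condition $\zeta<\lambda_1$ follows automatically since $\zeta\le\xi$.
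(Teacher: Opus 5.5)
Your proof is correct, but it takes a slightly different route from the paper's. The paper changes variables along an orthonormal eigenbasis, $U=\alpha_1\vec e_1+\alpha_2\vec e_2$ with $\alpha_i\in H_0^1(\Omega)$. It derives the exact identity $\norm{U}_W^2-\int_\Omega A^*U\cdot U\dd x=\sum_{i}\bigl(\|\alpha_i\|_{H_0^1}^2-\xi_i\|\alpha_i\|_{L^2}^2\bigr)$, with $\xi_1=\xi$ and $\xi_2=\zeta$, and applies Poincar\'e to each scalar piece with its own eigenvalue. You drop this per-direction information, use only the pointwise Rayleigh bound $A^*z\cdot z\le\xi|z|^2$, and apply Poincar\'e to $U$ at once. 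That is shorter and needs no function-level decomposition.

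More importantly, your truncation $\mu=\max(\xi,0)$ handles a sign issue that the paper gets wrong. The paper asserts $\|\alpha_i\|_{H_0^1}^2-\xi_i\|\alpha_i\|_{L^2}^2\ge(1-\xi_i/\lambda_1)\|\alpha_i\|_{H_0^1}^2$ for both $i$. This uses Poincar\'e in the wrong direction whenever $\xi_i<0$. Its constant $c_0=\min\{1-\xi/\lambda_1,\,1-\zeta/\lambda_1\}=1-\xi/\lambda_1$ exceeds $1$ when $\xi<0$, and the claimed inequality with that $c_0$ is then false. For instance, for $U=\varphi_k\vec e_1$, with $\varphi_k$ a Dirichlet eigenfunction and $\lambda_k>\lambda_1$, the ratio is $1-\xi/\lambda_k<1-\xi/\lambda_1$.

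The lemma itself survives with the corrected constant $\min\{1,\,1-\xi/\lambda_1\}$, which is exactly your $1-\mu/\lambda_1$. This constant is in fact sharp: use $U=\varphi_1\vec e_1$ if $\xi\ge0$, and $U=\varphi_k\vec e_1$ with $k\to\infty$ if $\xi<0$.
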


\begin{proof}
Let $\vec e_1,\vec e_2$ be orthonormal eigenvectors of $A^*$
corresponding to $\xi,\zeta$, respectively. Since $A^*$ is symmetric,
such a basis exists. For $U\in W$, write
\[
U=\alpha_1\vec e_1+\alpha_2\vec e_2,\qquad
\alpha_i:=U\cdot\vec e_i.
\]
As $\vec e_i$ are constant vectors, $\alpha_i\in H_0^1(\Omega)$ for
$i=1,2$. Moreover, since $\vec e_i\cdot\vec e_j=\delta_{ij}$ ($\delta_{ij}=0$ if $i\neq j$, and $\delta_{ij}=1$ if $i= j$), we have
\[
\nabla U=\vec e_1\otimes\nabla\alpha_1+\vec e_2\otimes\nabla\alpha_2,
\]
and hence
\[
|\nabla U|^2
=|\vec e_1|^2|\nabla\alpha_1|^2
+2(\vec e_1\cdot\vec e_2)(\nabla\alpha_1\cdot\nabla\alpha_2)
+|\vec e_2|^2|\nabla\alpha_2|^2
=|\nabla\alpha_1|^2+|\nabla\alpha_2|^2.
\]
Integrating over $\Omega$,
\[
\norm{U}_W^2
=\int_\Omega|\nabla U|^2\dd x
=\|\alpha_1\|_{H_0^1}^2+\|\alpha_2\|_{H_0^1}^2.
\]
Similarly, using $A^*\vec e_i=\xi_i\vec e_i$ with $\xi_1:=\xi$ and
$\xi_2:=\zeta$, together with $\vec e_i\cdot\vec e_j=\delta_{ij}$,
\[
\int_\Omega A^*U\cdot U\dd x
=\xi\int_\Omega\alpha_1^2\dd x+\zeta\int_\Omega\alpha_2^2\dd x.
\]
Hence
\[
\norm{U}_W^2-\int_\Omega A^*U\cdot U\dd x
=\sum_{i=1}^{2}\left(\|\alpha_i\|_{H_0^1}^2-\xi_i\|\alpha_i\|_{L^2}^2\right).
\]
By the Poincar\'e inequality $\|\alpha_i\|_{L^2}^2\le\lambda_1^{-1}\|\alpha_i\|_{H_0^1}^2$ for $\alpha_i\in H_0^1(\Omega)$, we get
\[
\|\alpha_i\|_{H_0^1}^2-\xi_i\|\alpha_i\|_{L^2}^2\ge\left(1-\frac{\xi_i}{\lambda_1}\right)\|\alpha_i\|_{H_0^1}^2
\ge c_0\|\alpha_i\|_{H_0^1}^2,
\]
where $c_0:=\min\{1-\frac{\xi}{\lambda_1},1-\frac{\zeta}{\lambda_1}\}>0$ by $\xi_i\le\max\{\xi,\zeta\}<\lambda_1$ in \eqref{posdef}. Summing over $i$,
\[
\norm{U}_W^2-\int_\Omega A^*U\cdot U\dd x\ge c_0\sum_{i=1}^2\|\alpha_i\|_{H_0^1}^2=c_0\norm{U}_W^2,
\]
which is \eqref{coercive}.
\end{proof}

\begin{lemma}[Growth estimate (I)]\label{lem:growth}
Assume (F1)--(F4). Then for every $\eps>0$ there exists $M=M(\eps)>0$ such that for a.e.\ $x\in\Omega$ and all $U\in\R^2$,
\begin{equation}\label{est1}
|\nabla F(x,U)|\le 2^*\eps |U|^{2^*-1}+M(\eps),
\end{equation}
and
\begin{equation}\label{est2}
|F(x,U)|\le \eps|U|^{2^*}+M(\eps)|U|.
\end{equation}
\end{lemma}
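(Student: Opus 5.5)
The plan is to derive both estimates from the behaviour of $\nabla F$ at infinity given by (F4) together with the local boundedness (F2), and then integrate along rays using (F1).

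\textbf{Step 1: estimate \eqref{est1}.} Fix $\eps>0$. By (F4) there is $R=R(\eps)>0$ such that for a.e.\ $x\in\Omega$ and all $|U|\ge R$,
\[
|\nabla F(x,U)|\le 2^*\eps|U|^{2^*-1}.
\]
This is possible since the limit is uniform in $x$, and the factor $2^*$ is only a harmless renormalization of $\eps$. For $|U|\le R$, assumption (F2) with $M_0=R$ gives
\[
|\nabla F(x,U)|\le K(R):=\sup\{|\nabla F(x,U)|:\ |U|\le R\}<\infty .
\]
Setting $M(\eps):=K(R(\eps))$ yields \eqref{est1} for all $U\in\R^2$.

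\textbf{Step 2: estimate \eqref{est2}.} Since $F\in C^1$ and $F(x,0)=0$ by (F1), the fundamental theorem of calculus along the segment from $0$ to $U$ gives
\[
F(x,U)=\int_0^1\nabla F(x,sU)\cdot U\dd s .
\]
Inserting \eqref{est1} yields
\[
|F(x,U)|\le\int_0^1\bigl(2^*\eps s^{2^*-1}|U|^{2^*-1}+M(\eps)\bigr)|U|\dd s .
\]
Since $\int_0^1 2^*s^{2^*-1}\dd s=1$, this equals $\eps|U|^{2^*}+M(\eps)|U|$, which is \eqref{est2} with the same $M(\eps)$.

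\textbf{Main obstacle.} There is no serious difficulty. The one point needing care is the measure-theoretic "a.e.\ $x$" qualifier in (F2), which is harmless because $F$ is continuous on $\overline\Omega\times\R^2$. Assumption (F3) is not needed here; it will matter only for the mountain pass geometry later.
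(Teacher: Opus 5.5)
Your proposal is correct and follows essentially the same route as the paper: (F4) handles $|U|$ large, (F2) bounds $|\nabla F|$ on the ball $|U|\le R(\eps)$, and \eqref{est2} follows by integrating along the segment from $0$ to $U$ using $F(x,0)=0$. Your computation $\int_0^1 2^*s^{2^*-1}\,\mathrm{d}s=1$ makes explicit the step the paper only states. (To meet the requirement $M(\eps)>0$ literally, take $M(\eps):=K(R(\eps))+1$.)
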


\begin{proof}
By (F4), for every $\eps>0$ there exists $\sigma=\sigma(\eps)>0$ such that
$$
|\nabla F(x,U)|\le 2^*\eps|U|^{2^*-1},
$$
for $|U|>\sigma$ and a.e.\ $x\in\Omega$. By (F2) there exists $M=M(\eps)>0$ such that
$$
|\nabla F(x,U)|\le M,
$$
for $|U|\le\sigma$ and a.e.\ $x\in\Omega$. Combining the two estimates gives \eqref{est1}. Integrating \eqref{est1} from $0$ to $U$ (using $F(x,0)=0$) gives \eqref{est2}.
\end{proof}

\begin{lemma}[Growth estimate (II)]\label{lem:growth2}
Assume (F1)--(F4). Then for every $\eps>0$ there exists $d(\eps)>0$ such that for a.e.\ $x\in\Omega$ and all $U\in\R^2$,
\begin{equation}\label{est3}
|\nabla F(x,U)|\le 2\eps|U|+2^*d(\eps)|U|^{2^*-1},
\end{equation}
and
\begin{equation}\label{est4}
|F(x,U)|\le \eps|U|^2+d(\eps)|U|^{2^*}.
\end{equation}
\end{lemma}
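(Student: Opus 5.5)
The plan is to split $\R^2$ into a neighbourhood of the origin and its complement, and to treat the middle range as in Lemma \ref{lem:growth}. Fix $\eps>0$. By (F3) there is $\rho=\rho(\eps)\in(0,1)$ such that
\[
|\nabla F(x,U)|\le 2\eps|U|\quad\text{for }|U|\le\rho \text{ and a.e. } x\in\Omega .
\]
By (F4) there is $\sigma=\sigma(\eps)>\rho$ such that
\[
|\nabla F(x,U)|\le 2^*|U|^{2^*-1}\quad\text{for }|U|\ge\sigma \text{ and a.e. } x\in\Omega .
\]
Any fixed constant works in this second bound; we only need $d(\eps)\ge1$ there. On the annulus $\rho\le|U|\le\sigma$, hypothesis (F2) with $M_0=\sigma$ gives a bound $|\nabla F(x,U)|\le K(\eps)$.

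Since $|U|\ge\rho$ on the annulus,
\[
K(\eps)\le K(\eps)\rho^{-(2^*-1)}|U|^{2^*-1}.
\]
Set
\[
d(\eps):=\max\Bigl\{1,\ \frac{K(\eps)}{2^*\rho^{2^*-1}}\Bigr\}.
\]
In each of the three regions one of the two terms on the right of \eqref{est3} dominates $|\nabla F(x,U)|$, and the other term is nonnegative. This proves \eqref{est3} for all $U\in\R^2$ and a.e. $x$.

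For \eqref{est4}, use (F1) and the $C^1$ regularity of $F$ in $U$ to write
\[
F(x,U)=\int_0^1\nabla F(x,sU)\cdot U\dd s .
\]
Inserting \eqref{est3} at the point $sU$ gives
\[
|F(x,U)|\le\int_0^1\bigl(2\eps s|U|^2+2^*d(\eps)s^{2^*-1}|U|^{2^*}\bigr)\dd s=\eps|U|^2+d(\eps)|U|^{2^*},
\]
which is exactly \eqref{est4}.

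The only point needing care is the middle region. There the bound from (F2) is a constant, which does not directly have the form $|U|^{2^*-1}$. The lower bound $|U|\ge\rho$ resolves this by converting the constant into a multiple of $|U|^{2^*-1}$, and this is why $d$ depends on $\eps$ through $\rho(\eps)$. Everything else is routine.
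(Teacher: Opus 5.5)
Your proof is correct and follows essentially the same route as the paper: a small-$|U|$ bound from (F3), turning a bounded constant into a multiple of $|U|^{2^*-1}$ via the lower bound on $|U|$ away from the origin, and integrating along the segment using (F1). The only cosmetic difference is that the paper treats the whole region $|U|\ge\sigma_1$ at once by invoking \eqref{est1} with $\eps=1$ from Lemma \ref{lem:growth}, whereas you re-derive that bound by splitting into an annulus (using (F2)) and an exterior region (using (F4)).
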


\begin{proof}
By (F3), for every $\eps>0$ there exists $\sigma_1=\sigma_1(\eps)>0$ such that
\begin{equation}\label{sigma}
|\nabla F(x,U)|\le 2\eps|U|
\qquad\text{for }|U|<\sigma_1\text{ and a.e. }x\in\Omega.
\end{equation}
By \eqref{est1} with $\eps=1$ there exists $M(1)>0$ such that
\begin{equation}\label{est1-eps1}
|\nabla F(x,U)|\le 2^*|U|^{2^*-1}+M(1)
\end{equation}
for a.e.\ $x\in\Omega$ and all $U\in\R^2$.

For $|U|\ge\sigma_1$, we have $|U|^{2^*-1}\ge\sigma_1^{2^*-1}$, hence
\[
1\le\frac{|U|^{2^*-1}}{\sigma_1^{2^*-1}},\qquad\text{so}\qquad
M(1)\le\frac{M(1)}{\sigma_1^{2^*-1}}|U|^{2^*-1}.
\]
Inserting this into \eqref{est1-eps1} gives
\begin{equation}\label{sigma-2}
|\nabla F(x,U)|\le 2^*d(\eps)|U|^{2^*-1}\qquad\text{for }|U|\ge\sigma_1\text{ and a.e. }x\in\Omega,
\end{equation}
where $d(\eps):=1+\frac{M(1)}{2^*\sigma_1^{2^*-1}}$.

Combining the estimates for $|U|<\sigma_1$ in \eqref{sigma} and $|U|\ge\sigma_1$ in \eqref{sigma-2} gives \eqref{est3}; integrating \eqref{est3} gives \eqref{est4}.
\end{proof}

\section{System Sobolev inequality and concentration compactness}\label{sec:sobolev}
Recall from Definition \ref{def:W} the best Sobolev constant $S$. For bounded
$\Omega$ the infimum defining $S$ is not attained in $H_0^1(\Omega)$; it is
attained on $\R^N$ by the Aubin--Talenti functions \cite{Aubin,Talenti}
\begin{equation}\label{u11}
u_\eps(x)=\frac{1}{(\eps+|x|^2)^{(N-2)/2}},\qquad\eps>0
\end{equation}
(up to a normalization constant).

\begin{lemma}[System Sobolev inequality]\label{lem:sys-sobolev}
For every $U\in W$,
\begin{equation}\label{sys-s}
\norm{U}_W^2\ge S\norm{U}_{L^{2^*}}^2,
\end{equation}
where $\|U\|_{L^{2^*}}=\left(\int_\Omega|U|^{2^*}\dd x\right)^{1/2^*}$.
\end{lemma}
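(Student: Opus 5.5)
The plan is to reduce \eqref{sys-s} to the scalar Sobolev inequality applied to each component, and then recombine using the triangle (Minkowski) inequality in $L^{2^*/2}(\Omega)$. Write $U=(u_1,u_2)\in W$ and set $p:=2^*/2=\frac{N}{N-2}>1$. Then
\[
\|U\|_{L^{2^*}}^2=\Bigl(\int_\Omega (u_1^2+u_2^2)^{p}\dd x\Bigr)^{1/p}=\norm{u_1^2+u_2^2}_{L^p}.
\]

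First, since $p\ge1$, the triangle inequality in $L^p(\Omega)$ gives
\[
\norm{u_1^2+u_2^2}_{L^p}\le\norm{u_1^2}_{L^p}+\norm{u_2^2}_{L^p}=\|u_1\|_{L^{2^*}}^2+\|u_2\|_{L^{2^*}}^2 .
\]
Second, by the definition of $S$ in Definition \ref{def:W}, each $u_i\in H_0^1(\Omega)$ satisfies $\|u_i\|_{L^{2^*}}^2\le S^{-1}\|\nabla u_i\|_{L^2}^2$. This is trivially true when $u_i=0$. Summing the two scalar inequalities gives
\[
\|U\|_{L^{2^*}}^2\le S^{-1}\bigl(\|\nabla u_1\|_{L^2}^2+\|\nabla u_2\|_{L^2}^2\bigr)=S^{-1}\norm{U}_W^2,
\]
which is \eqref{sys-s}.

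There is no serious obstacle here. The only point needing care is to use Minkowski's inequality for the squared quantities in $L^{p}$ with $p=2^*/2$, rather than trying to split $|U|^{2^*}$ directly, which would lose a constant. An optional complement, if one wants to stress that the constant cannot be improved, is sharpness: taking $U=(u,0)$ reduces \eqref{sys-s} to the scalar inequality, so the best constant in \eqref{sys-s} is exactly $S$.
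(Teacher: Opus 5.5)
Your proof is correct, but it takes a different route from the paper.

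The paper works with the single scalar function $|U|=(u_1^2+u_2^2)^{1/2}$. It uses the pointwise inequality $|\nabla|U||\le|\nabla U|$ a.e.\ (chain rule plus Cauchy--Schwarz) to get $|U|\in H_0^1(\Omega)$ with $\int_\Omega|\nabla|U||^2\dd x\le\|U\|_W^2$. It then applies the scalar Sobolev inequality once, to $|U|$.

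You instead apply the scalar inequality to each component and recombine via Minkowski's inequality in $L^{2^*/2}(\Omega)$. This is legitimate since $2^*/2=N/(N-2)>1$, and your identities $\|U\|_{L^{2^*}}^2=\|u_1^2+u_2^2\|_{L^{2^*/2}}$ and $\|u_i^2\|_{L^{2^*/2}}=\|u_i\|_{L^{2^*}}^2$ are right.

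Your argument is more elementary. It needs nothing beyond the definition of $S$ and the triangle inequality. It also sidesteps the chain rule for the nonsmooth map $U\mapsto|U|$ and the check that $|U|$ has zero trace, which the paper only sketches.

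In exchange, the paper's route shows that the vector Sobolev quotient of $U$ dominates the scalar Sobolev quotient of the single function $|U|$. This Kato-type reduction is useful beyond this lemma, whereas your bound only recombines componentwise estimates.

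Both arguments give exactly the constant $S$, and your sharpness remark via $U=(u,0)$ is correct.
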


\begin{proof}
Write $U=(u_1,u_2)$ and let $|U|=(u_1^2+u_2^2)^{1/2}$. Since
$|\nabla|U||\le|\nabla U|$ a.e.\ in $\Omega$ (by the chain rule and the
Cauchy--Schwarz inequality) and $|U|=0$ on $\partial\Omega$ in the trace
sense, we have $|U|\in H_0^1(\Omega)$. Applying the scalar Sobolev inequality
to $|U|$,
\[
\|U\|_W^2=\int_\Omega|\nabla U|^2\dd x
\ge\int_\Omega|\nabla|U||^2\dd x
\ge S\|U\|_{L^{2^*}}^2.
\]
This gives \eqref{sys-s}.
\end{proof}

\begin{lemma}[Concentration-compactness for systems]\label{lem:cc-system}
Let $\{U_j\}\subset W$ be a sequence with $\|U_j\|_W\le C$ satisfying
\begin{equation}\label{cc-hyp-b}
\|U_j\|_W^2-\int_\Omega|U_j|^{2^*}\dd x\to0.
\end{equation}
Then, up to a subsequence, either
\begin{itemize}
\item[(i)] $\int_\Omega|U_j|^{2^*}\dd x\to0$ and $\|U_j\|_W\to0$, or
\item[(ii)] $\int_\Omega|U_j|^{2^*}\dd x\to L\ge S^{N/2}$.
\end{itemize}
\end{lemma}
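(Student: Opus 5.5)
The plan is to combine the system Sobolev inequality (Lemma \ref{lem:sys-sobolev}) with the hypothesis \eqref{cc-hyp-b}; no genuine concentration-compactness machinery is needed. Set $a_j:=\|U_j\|_W^2$ and $b_j:=\int_\Omega|U_j|^{2^*}\dd x$. Since $\|U_j\|_W\le C$, the sequence $(a_j)$ is bounded. By \eqref{sys-s}, $b_j\le (a_j/S)^{2^*/2}$, so $(b_j)$ is bounded as well. Passing to a subsequence, we may assume $b_j\to L\ge0$. Then \eqref{cc-hyp-b} gives $a_j=b_j+o(1)\to L$.

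Next, Lemma \ref{lem:sys-sobolev} gives, for every $j$,
\[
a_j\ge S\,b_j^{2/2^*}.
\]
Letting $j\to\infty$ yields
\[
L\ge S\,L^{2/2^*}=S\,L^{(N-2)/N}.
\]
We now split into two cases.

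\emph{Case $L=0$.} Then $b_j\to0$ and $a_j\to0$, i.e.\ $\|U_j\|_W\to0$. This is alternative (i).

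\emph{Case $L>0$.} Divide the limiting inequality by $L^{(N-2)/N}$ to get $L^{2/N}\ge S$, that is, $L\ge S^{N/2}$. This is alternative (ii).

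The only step needing care is justifying \eqref{sys-s} for vector-valued $U$. This is already settled by Lemma \ref{lem:sys-sobolev}, which uses $|U|\in H_0^1(\Omega)$ and $|\nabla|U||\le|\nabla U|$. Beyond that, the argument only requires extracting a convergent subsequence of the bounded real sequence $(b_j)$; no weak-limit or measure-concentration analysis is involved. The main point to check is that the exponent arithmetic $1-\frac{2}{2^*}=\frac{2}{N}$ is correct, which it is.
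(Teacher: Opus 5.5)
Your proposal is correct and follows essentially the same argument as the paper. You bound $\int_\Omega|U_j|^{2^*}\dd x$ via Lemma \ref{lem:sys-sobolev}, extract a convergent subsequence with limit $L$, pass to the limit in $\|U_j\|_W^2\ge S\|U_j\|_{L^{2^*}}^2$ using \eqref{cc-hyp-b}, and split into the cases $L=0$ and $L>0$; the paper does the same, and your exponent arithmetic $1-\frac{2}{2^*}=\frac{2}{N}$ is right.
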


\begin{proof}
Since $\|U_j\|_W\le C$ and \eqref{cc-hyp-b} holds, the sequence $\int_\Omega|U_j|^{2^*}\dd x$ is bounded by Lemma \ref{lem:sys-sobolev}, hence (up to a subsequence) converges to some $L\ge0$.

If $L=0$, then $\int_\Omega|U_j|^{2^*}\dd x\to0$. By \eqref{cc-hyp-b},
\[
\|U_j\|_W^2=\int_\Omega|U_j|^{2^*}\dd x+o(1)\to0,
\]
so we are in case (i).

If $L>0$, by \eqref{cc-hyp-b} and \eqref{sys-s},
\[
L=\lim_j\|U_j\|_W^2\ge S\lim_j\|U_j\|_{L^{2^*}}^2=S L^{2/2^*}.
\]
Dividing by $L^{2/2^*}>0$, we get $L^{1-2/2^*}\ge S$, i.e.\ $L\ge S^{N/2}$. This is case (ii).
\end{proof}

\begin{remark}\label{rem:cc-comment}
Lemma \ref{lem:cc-system} is the system analogue of the classical concentration-compactness principle of Lions \cite{Lions}. The key point is that we do \emph{not} need to split $U_j$ into bubbles: the coupling of $u$ and $v$ through $|U|^{2^*}$ forces the energy concentration to occur as a single bubble, so the dichotomy case (two separate concentrations) cannot occur below the threshold $\frac1N S^{N/2}$. This is precisely why the energy identity $c=\frac1N L$ with $L\ge S^{N/2}$ suffices to exclude the vanishing of the weak limit in Proposition \ref{PS-range}.
\end{remark}

\section{Compactness in the energy range}\label{sec:ps}

In this section we prove the compactness result needed for the mountain pass theorem. We do \emph{not} prove the full Palais--Smale condition, but only that any $(PS)_c$ sequence at level $c<\frac1N S^{N/2}$ has a nontrivial weak limit which is a critical point.

\begin{lemma}[Bounded sequences converging a.e.\ converge weakly]\label{lem:bdd-ae-weak}
Let $1<p<\infty$ and $m\ge1$. Let $\{f_j\}$ be a bounded sequence in
$L^p(\Omega;\R^m)$ such that $f_j\to f$ a.e.\ in $\Omega$ for some
$f\in L^p(\Omega;\R^m)$. Then $f_j\rightharpoonup f$ weakly in
$L^p(\Omega;\R^m)$.
\end{lemma}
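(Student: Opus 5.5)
The plan is to combine reflexivity with Egorov's theorem. Since $1<p<\infty$, the space $L^p(\Omega;\R^m)$ is reflexive. A bounded sequence therefore has weakly convergent subsequences. So it suffices to show that every weakly convergent subsequence of $\{f_j\}$ has limit $f$; a standard subsequence argument then gives weak convergence of the whole sequence. Concretely, I suppose $f_{j_k}\rightharpoonup g$ in $L^p$ and aim to show $g=f$ a.e. Alternatively, I can test directly against $\varphi\in L^{p'}(\Omega;\R^m)$, with $p'=p/(p-1)$, and show $\int_\Omega (f_j-f)\cdot\varphi\dd x\to0$. I will follow the direct route, since it avoids identifying limits.

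Fix $\varphi\in L^{p'}$ and $\eta>0$, and set $M:=\sup_j\|f_j\|_{L^p}+\|f\|_{L^p}<\infty$. Because $\Omega$ is bounded and $|\varphi|^{p'}\in L^1$, absolute continuity of the integral gives $\rho>0$ with $\|\varphi\|_{L^{p'}(E)}<\eta$ whenever $|E|<\rho$. By Egorov's theorem, which applies since $|\Omega|<\infty$ and $f_j\to f$ a.e., there is a measurable $E\subset\Omega$ with $|E|<\rho$ such that $f_j\to f$ uniformly on $\Omega\setminus E$.

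I then split the integral into two pieces. On $E$, Hölder's inequality gives
\[
\left|\int_E (f_j-f)\cdot\varphi\dd x\right|\le \|f_j-f\|_{L^p}\|\varphi\|_{L^{p'}(E)}\le M\eta .
\]
On $\Omega\setminus E$, uniform convergence together with Hölder's inequality gives
\[
\left|\int_{\Omega\setminus E}(f_j-f)\cdot\varphi\dd x\right|\le \sup_{\Omega\setminus E}|f_j-f|\,|\Omega|^{1/p}\|\varphi\|_{L^{p'}}\to0 .
\]
Hence $\limsup_j\left|\int_\Omega(f_j-f)\cdot\varphi\dd x\right|\le M\eta$. Since $\eta>0$ is arbitrary, the limit is zero. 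By Riesz representation of $(L^p)^*=L^{p'}$ this is exactly weak convergence.

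The only delicate point is the control of the small exceptional set $E$. This is where both the boundedness of $\{f_j\}$ in $L^p$ and $p>1$ are used: $p>1$ makes $p'<\infty$, so $\varphi$ has absolutely continuous $L^{p'}$-norm. For $p=1$ the statement indeed fails. The vector-valued case needs no extra work, since $|(f_j-f)\cdot\varphi|\le|f_j-f||\varphi|$ pointwise.
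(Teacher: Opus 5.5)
Your proof is correct, but it takes a different route from the paper's.

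The paper argues as follows. It uses reflexivity to extract a subsequence with $f_{j}\rightharpoonup g$. It then tests only against $\varphi\in C_c^\infty(\Omega;\R^m)$. Because such $\varphi$ is bounded, the $L^p$ bound and H\"older's inequality make $\{f_j\cdot\varphi\}$ uniformly integrable, and Vitali's theorem gives $\int_\Omega f_j\cdot\varphi\dd x\to\int_\Omega f\cdot\varphi\dd x$. Density of $C_c^\infty$ in $L^{p'}$ then identifies $g=f$, and the subsequence principle yields convergence of the whole sequence.

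You instead test directly against an arbitrary $\varphi\in L^{p'}$. In place of boundedness of $\varphi$ you use absolute continuity of $E\mapsto\int_E|\varphi|^{p'}\dd x$ to control the small exceptional set, and Egorov's theorem handles its complement. The two mechanisms are close, since Vitali's theorem is itself usually proved via Egorov.

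What your version buys is self-containment. You need neither reflexivity, nor density of $C_c^\infty$ in $L^{p'}$, nor the identification-of-limits and subsequence step. Your opening remark about reflexivity is in fact never used along the direct route you follow, so you could drop it.

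What the paper's version buys is that the integrability estimate becomes trivial once the test functions are bounded. It pays for this with the functional-analytic upgrade from $C_c^\infty$ to all of $L^{p'}$.

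Both arguments use $|\Omega|<\infty$. Both use $p>1$ in the same essential way, namely $p'<\infty$.
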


\begin{proof}
Since $p\in(1,\infty)$, the space $L^p(\Omega;\R^m)$ is reflexive, and
$\{f_j\}$ is bounded in $L^p$. Hence there exist a subsequence
$\{f_{j_k}\}$ and $g\in L^p(\Omega;\R^m)$ such that
$f_{j_k}\rightharpoonup g$ weakly in $L^p$. Relabeling
$\{f_{j_k}\}$ as $\{f_j\}$, we have $f_j\rightharpoonup g$ weakly in
$L^p$. We claim that $g=f$.

Fix $\varphi\in C_c^\infty(\Omega;\R^m)$. Since $\Omega$ is bounded,
$C_c^\infty(\Omega;\R^m)\subset L^{p'}(\Omega;\R^m)$. By the
definition of weak convergence $f_j\rightharpoonup g$ in
$L^p(\Omega;\R^m)$,
\begin{equation}\label{gf1}
\int_\Omega g\cdot\varphi\dd x
=\lim_j\int_\Omega f_j\cdot\varphi\dd x.
\end{equation}
On the other hand, by the hypothesis of the lemma, $f_j\to f$ a.e.\
in $\Omega$, hence $f_j\cdot\varphi\to f\cdot\varphi$ a.e.\ in
$\Omega$. By Cauchy--Schwarz and the boundedness of $\varphi$,
\[
|f_j\cdot\varphi|\le|f_j|\,\|\varphi\|_{L^\infty}\quad\text{a.e.},
\]
and hence, by the pointwise estimate and the definition of the
$L^p$ norm,
\[
\|f_j\cdot\varphi\|_{L^p}
\le\|f_j\|_{L^p}\|\varphi\|_{L^\infty}
\le C\|\varphi\|_{L^\infty},
\]
where $C:=\sup_j\|f_j\|_{L^p}<\infty$. Thus $\{f_j\cdot\varphi\}$
is bounded in $L^p$. By H\"older's inequality, for every measurable $E\subset\Omega$,
\[
\int_E|f_j\cdot\varphi|\dd x
\le\|f_j\cdot\varphi\|_{L^p}|E|^{1/p'}
\le C\|\varphi\|_{L^\infty}|E|^{1/p'},
\]
where $C:=\sup_j\|f_j\|_{L^p}<\infty$. Taking $E=\Omega$ gives
$\sup_j\|f_j\cdot\varphi\|_{L^1}<\infty$ (uniform $L^1$-boundedness),
while the estimate with $|E|\to0$ gives uniform absolute continuity.
Hence $\{f_j\cdot\varphi\}$ is uniformly integrable. By Vitali's convergence theorem,
\begin{equation}\label{ff1}
\lim_j\int_\Omega f_j\cdot\varphi\dd x
=\int_\Omega f\cdot\varphi\dd x.
\end{equation}
Therefore, \eqref{gf1} and \eqref{ff1} imply that
$$
\int_\Omega (g-f)\cdot\varphi\dd x=0,
$$
for all
$\varphi\in C_c^\infty(\Omega;\R^m)$. Let $h:=g-f\in
L^p(\Omega;\R^m)$. Since $C_c^\infty(\Omega;\R^m)$ is dense in
$L^{p'}(\Omega;\R^m)$, for every $\psi\in L^{p'}(\Omega;\R^m)$ there
exists $\{\varphi_k\}\subset C_c^\infty(\Omega;\R^m)$ with
$\varphi_k\to\psi$ in $L^{p'}$, and hence
\[
\int_\Omega h\cdot\psi\dd x
=\lim_k\int_\Omega h\cdot\varphi_k\dd x=0.
\]
Taking $\psi=|h|^{p-2}h\in L^{p'}$ (when $h\ne0$) gives
$\int_\Omega|h|^p\dd x=0$, so $h=0$ a.e., i.e.\ $g=f$ in $L^p$.

Since the argument applies to any weakly convergent subsequence of
$\{f_j\}$, all such subsequences have the same weak limit $f$. By the
subsequence principle, $f_j\rightharpoonup f$ weakly in $L^p$.
\end{proof}

\begin{proposition}[Nonvanishing of $(PS)_c$ sequences below the critical level]\label{PS-range}
Assume \eqref{posdef} and (F1)--(F4). Let $\{U_j\}\subset W$ satisfy
\[
\J(U_j)\to c,\qquad \norm{\J'(U_j)}_{W^*}\to0,
\qquad c\in\left(0,\frac{1}{N}S^{N/2}\right).
\]
Then $\{U_j\}$ is bounded in $W$, and up to a subsequence
$U_j\rightharpoonup U$ weakly in $W$, where $U\ne0$ is a critical point
of $\J$.
\end{proposition}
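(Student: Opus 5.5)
The plan has three steps: show that $\{U_j\}$ is bounded, pass to the limit in $\J'(U_j)$ to see that the weak limit is a critical point, and rule out $U=0$ with Lemma \ref{lem:cc-system} and the bound $c<\frac1N S^{N/2}$.

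\textbf{Step 1: boundedness.} This is the step I expect to be the main obstacle, because (F1)--(F4) contain no Ambrosetti--Rabinowitz condition. I would exploit the critical term itself. Consider
\[
\J(U_j)-\tfrac12\inner{\J'(U_j)}{U_j}
=\tfrac1N\int_\Omega|U_j|^{2^*}\dd x+\int_\Omega\Big(\tfrac12\nabla F(x,U_j)\cdot U_j-F(x,U_j)\Big)\dd x .
\]
Its left side is bounded by $c+1+o(1)\norm{U_j}_W$. By \eqref{est1} and \eqref{est2} with a small parameter $\eps$, the integrand of the last term is bounded in absolute value by $C\eps|U_j|^{2^*}+CM(\eps)|U_j|$. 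Fix $\eps$ so that $C\eps\le\frac1{2N}$, and use $\int_\Omega|U_j|\le C\norm{U_j}_W$. This gives
\[
\int_\Omega|U_j|^{2^*}\dd x\le C\big(1+\norm{U_j}_W\big).
\]
Next, from $\J(U_j)\to c$, Lemma \ref{lem:positive} and \eqref{est2} again, I get
\[
\tfrac{c_0}{2}\norm{U_j}_W^2\le c+1+C\int_\Omega|U_j|^{2^*}\dd x+C\norm{U_j}_W\le C\big(1+\norm{U_j}_W\big),
\]
so $\{U_j\}$ is bounded in $W$.

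\textbf{Step 2: the weak limit is a critical point.} Up to a subsequence, $U_j\rightharpoonup U$ in $W$. By Remark \ref{rem:W-embeddings}, $U_j\to U$ in $L^s$ for $s<2^*$ and almost everywhere. The sequences $|U_j|^{2^*-2}U_j$ and $\nabla F(x,U_j)$ are bounded in $L^{(2^*)'}$, the latter by \eqref{est1}. Both converge a.e., by continuity of $F$ in $U$, so Lemma \ref{lem:bdd-ae-weak} gives weak convergence in $L^{(2^*)'}$. Fix $V\in W$. The term $\int_\Omega A^*U_j\cdot V\dd x$ converges by $L^2$ convergence. Passing to the limit in \eqref{deriv} therefore yields $\inner{\J'(U)}{V}=\lim_j\inner{\J'(U_j)}{V}=0$.

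\textbf{Step 3: $U\neq0$.} Suppose instead $U=0$. Then $\int_\Omega A^*U_j\cdot U_j\dd x\to0$, since $U_j\to0$ in $L^2$. Also $\int_\Omega|F(x,U_j)|\dd x\to0$ and $\int_\Omega|\nabla F(x,U_j)\cdot U_j|\dd x\to0$. To see this, apply \eqref{est1} and \eqref{est2} with arbitrary $\eps$: the $\eps$-part is bounded by $\eps\sup_j\int_\Omega|U_j|^{2^*}$, and $M(\eps)\int_\Omega|U_j|\to0$. Since $\inner{\J'(U_j)}{U_j}\to0$, it follows that $\norm{U_j}_W^2-\int_\Omega|U_j|^{2^*}\dd x\to0$, so Lemma \ref{lem:cc-system} applies.
\begin{itemize}
\item[(i)] If $\norm{U_j}_W\to0$, then $\J(U_j)\to0$, contradicting $c>0$.
\item[(ii)] Otherwise $\int_\Omega|U_j|^{2^*}\to L\ge S^{N/2}$ and $\norm{U_j}_W^2\to L$. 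Hence $c=\lim_j\J(U_j)=\big(\frac12-\frac1{2^*}\big)L=\frac1N L\ge\frac1N S^{N/2}$, contradicting $c<\frac1N S^{N/2}$.
\end{itemize}
Therefore $U\ne0$, and by Step 2 it is a nontrivial critical point of $\J$.
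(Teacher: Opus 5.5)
Your proposal is correct and follows the paper's proof essentially step by step. You get boundedness from $\J(U_j)-\frac12\inner{\J'(U_j)}{U_j}$ via \eqref{est1}--\eqref{est2} and the coercivity \eqref{coercive}, pass to the limit in \eqref{deriv} through Lemma \ref{lem:bdd-ae-weak}, and exclude $U=0$ with Lemma \ref{lem:cc-system} and $c=\frac1N L$. There are two cosmetic differences. First, you dispose of alternative (i) of Lemma \ref{lem:cc-system} via $\J(U_j)\to0\ne c$, whereas the paper first derives $L=Nc>0$. Second, the a.e.\ convergence of $\nabla F(x,U_j)$ relies on continuity of $\nabla F$ (from $F\in C^1$), not merely of $F$.
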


\begin{proof}
Throughout we use $N\ge3$ and $2^*=\frac{2N}{N-2}$.

\medskip
\textbf{Step 1: Boundedness.}
Since $\J(U_j)\to c$ and $\norm{\J'(U_j)}_{W^*}\to0$, the sequences
$\{\J(U_j)\}$ and $\{\norm{\J'(U_j)}_{W^*}\}$ are bounded; hence there
exists $\kappa>0$ such that
\begin{equation}\label{kappa}
|\J(U_j)|\le\kappa,\qquad
\left|\inner{\J'(U_j)}{U_j}\right|\le\kappa\norm{U_j}_W
\end{equation}
for all $j$. Indeed, the first inequality follows from the boundedness of
$\{\J(U_j)\}$; the second follows from the definition of the dual norm
$\norm{\J'(U_j)}_{W^*}=\sup_{\norm{V}_W\le1}|\inner{\J'(U_j)}{V}|$ and the
boundedness of $\{\norm{\J'(U_j)}_{W^*}\}$.
Using \eqref{J-intro} and \eqref{deriv},
\begin{equation}\label{identity}
\J(U_j)-\frac12\inner{\J'(U_j)}{U_j}
=\frac{1}{N}\int_\Omega |U_j|^{2^*}\dd x
+\int_\Omega\left(\frac12\nabla F(x,U_j)\cdot U_j-F(x,U_j)\right)\dd x.
\end{equation}
By Lemma \ref{lem:growth} with $\eps>0$ small,
\[
\left|\int_\Omega\left(\frac12\nabla F\cdot U_j-F\right)\dd x\right|
\le \frac{2^*+2}{2}\eps\int_\Omega|U_j|^{2^*}\dd x
+\frac{3}{2}M(\eps)\int_\Omega|U_j|\dd x.
\]
Inserting into \eqref{identity} and using \eqref{kappa},
\begin{equation}\label{bound-L2star}
\left(\frac{1}{N}-\frac{2^*+2}{2}\eps\right)\int_\Omega|U_j|^{2^*}\dd x
\le \kappa(1+\norm{U_j}_W)+\frac{3}{2}M(\eps)\int_\Omega|U_j|\dd x.
\end{equation}
Choose $\eps>0$ small enough that
$\frac{1}{N}-\frac{2^*+2}{2}\eps>0$. By the embedding
$W\hookrightarrow L^1\times L^1$ (recall Remark \ref{rem:W-embeddings}),
\begin{equation}\label{UW-2}
\int_\Omega|U_j|\dd x\le C\|U_j\|_W.
\end{equation}
Combining with \eqref{bound-L2star},
\begin{equation}\label{bound-L2star-2}
\int_\Omega|U_j|^{2^*}\dd x\le \kappa_*(1+\norm{U_j}_W)
\end{equation}
for some $\kappa_*>0$.

On the other hand, by \eqref{coercive} and Lemma \ref{lem:growth} with
$\eps=1$,
\[
\J(U_j)
\ge \frac{c_0}{2}\norm{U_j}_W^2
-\left(\frac{1}{2^*}+1\right)\int_\Omega|U_j|^{2^*}\dd x
-M(1)\int_\Omega|U_j|\dd x.
\]
Combining with \eqref{kappa}, \eqref{UW-2} and \eqref{bound-L2star-2},
\[
\kappa\ge\frac{c_0}{2}\norm{U_j}_W^2
-\left(\left(\frac{1}{2^*}+1\right)\kappa_*+M(1)C\right)(1+\|U_j\|_W).
\]
Therefore
\[
\frac{c_0}{2}\norm{U_j}_W^2\le\kappa
+\left(\left(\frac{1}{2^*}+1\right)\kappa_*+M(1)C\right)(1+\|U_j\|_W).
\]
Hence $\{U_j\}$ is bounded in $W$.

\medskip
\textbf{Step 2: Weak convergence and $L^2$ compactness.}
Since $W$ is a Hilbert space, up to a subsequence
$U_j\rightharpoonup U$ weakly in $W$. Since the embedding
$W\hookrightarrow L^2\times L^2$ is compact
(recall Remark \ref{rem:W-embeddings}), we also have
\begin{equation}\label{L2-strong}
U_j\to U\qquad\text{strongly in }L^2\times L^2.
\end{equation}
Passing to a further subsequence, we may assume
\begin{equation}\label{ae-conv}
U_j\to U\qquad\text{a.e.\ in }\Omega.
\end{equation}

Since $A^*$ is symmetric,
\[
A^*(U_j-U)\cdot(U_j+U)=A^*U_j\cdot U_j-A^*U\cdot U
\]
pointwise on $\Omega$. Hence
\[
\int_\Omega A^*U_j\cdot U_j\dd x-\int_\Omega A^*U\cdot U\dd x
=\int_\Omega A^*(U_j-U)\cdot(U_j+U)\dd x,
\]
and therefore
\[
\left|\int_\Omega A^*U_j\cdot U_j\dd x-\int_\Omega A^*U\cdot U\dd x\right|
\le C\|U_j-U\|_{L^2}\|U_j+U\|_{L^2},
\]
where $C:=\|A^*\|_{op}<\infty$. By \eqref{L2-strong} and the boundedness
of $\{U_j\}$ in $L^2\times L^2$, the right-hand side tends to $0$, giving
\begin{equation}\label{L2conv}
\int_\Omega A^*U_j\cdot U_j\dd x\to\int_\Omega A^*U\cdot U\dd x.
\end{equation}

\medskip
\textbf{Step 3: $U$ is a critical point.}
Fix $V\in W$. Since $\norm{\J'(U_j)}_{W^*}\to0$ and $V$ is fixed,
$\inner{\J'(U_j)}{V}\to0$. We pass to the limit in each term of
$\inner{\J'(U_j)}{V}$.

\emph{Linear term.}
By weak convergence $U_j\rightharpoonup U$ in $W$,
\begin{equation}\label{w-norm-11}
\inner{U_j}{V}_W\to\inner{U}{V}_W.
\end{equation}
Moreover, by \eqref{L2-strong},
\[
\int_\Omega A^*U_j\cdot V\dd x\to\int_\Omega A^*U\cdot V\dd x.
\]

\emph{Critical term.}
By \eqref{bound-L2star-2}, the sequence
$\{|U_j|^{2^*-2}U_j\}$ is bounded in $L^{2^*/(2^*-1)}$. By
\eqref{ae-conv},
\[
|U_j|^{2^*-2}U_j\to|U|^{2^*-2}U\qquad\text{a.e.\ in }\Omega.
\]
Thus, by Lemma \ref{lem:bdd-ae-weak},
\[
|U_j|^{2^*-2}U_j\rightharpoonup|U|^{2^*-2}U
\qquad\text{weakly in }L^{2^*/(2^*-1)}.
\]
Since $V\in W\hookrightarrow L^{2^*}$,
\begin{equation}\label{critical-term}
\int_\Omega|U_j|^{2^*-2}U_j\cdot V\dd x
\to\int_\Omega|U|^{2^*-2}U\cdot V\dd x.
\end{equation}

\emph{$F$ term.}
By Lemma \ref{lem:growth} with $\eps=1$, there exists $M(1)>0$ such that
\[
|\nabla F(x,U)|\le 2^*|U|^{2^*-1}+M(1)\qquad\text{a.e.\ in }\Omega.
\]
Let $p:=2^*/(2^*-1)$. By $(a+b)^p\le 2^{p-1}(a^p+b^p)$ and
$(2^*-1)p=2^*$,
\[
|\nabla F(x,U_j)|^p
\le 2^{p-1}\left((2^*)^p|U_j|^{2^*}+M(1)^p\right)\quad\text{a.e.}
\]
Integrating and using \eqref{bound-L2star-2} together with the boundedness
of $\{U_j\}$ in $W$, we obtain
\[
\int_\Omega|\nabla F(x,U_j)|^p\dd x\le C
\]
for a constant $C$ depending only on $N,\Omega$ and the fixed constants
$M(1),\kappa_*$. Hence $\{\nabla F(\cdot,U_j)\}$ is bounded in
$L^{2^*/(2^*-1)}$.

Since $F\in C^1$ and $U_j\to U$ a.e.\ in $\Omega$ (by
\eqref{ae-conv}), we have
$$
\nabla F(x,U_j)\to\nabla F(x,U) \ \mbox{a.e. in} \  \Omega.
$$
Applying
Lemma \ref{lem:bdd-ae-weak} with $p=2^*/(2^*-1)$,
\[
\nabla F(\cdot,U_j)\rightharpoonup\nabla F(\cdot,U)
\qquad\text{weakly in }L^{2^*/(2^*-1)}.
\]
Since $V\in L^{2^*}$,
\begin{equation}\label{F-term}
\int_\Omega\nabla F(x,U_j)\cdot V\dd x
\to\int_\Omega\nabla F(x,U)\cdot V\dd x.
\end{equation}

Combining with \eqref{L2conv}--\eqref{F-term} and $\inner{\J'(U_j)}{V}\to0$, we get
$$
\inner{\J'(U)}{V}=0,\quad \forall V\in W,
$$
so $U$ is a critical
point of $\J$.

\medskip
\textbf{Step 4: $U\ne0$.}
Suppose, by contradiction, that $U=0$.
We claim that
\begin{equation}\label{vanishing}
\int_\Omega F(x,U_j)\dd x\to0,\qquad
\int_\Omega \nabla F(x,U_j)\cdot U_j\dd x\to0.
\end{equation}
Indeed, by Lemma \ref{lem:growth},
\[
\left|\int_\Omega F(x,U_j)\dd x\right|
\le\eps\int_\Omega|U_j|^{2^*}\dd x+M(\eps)\int_\Omega|U_j|\dd x.
\]
The first term is bounded by $\eps\kappa_*$ thanks to
\eqref{bound-L2star-2}, while the second tends to $0$ because
$U_j\to0$ in $L^1$ (by $U=0$ and the embedding
$W\hookrightarrow L^1\times L^1$ is compact
in Remark \ref{rem:W-embeddings}). Hence
\[
\limsup_{j\to\infty}\left|\int_\Omega F(x,U_j)\dd x\right|
\le\eps\kappa_*.
\]
Letting $\eps\to0^+$ gives $\int_\Omega F(x,U_j)\dd x\to0$. Similarly,
\[
|\nabla F(x,U_j)\cdot U_j|
\le 2^*\eps|U_j|^{2^*}+M(\eps)|U_j|,
\]
so the same argument gives
$\int_\Omega\nabla F(x,U_j)\cdot U_j\dd x\to0$. This proves
\eqref{vanishing}.

By \eqref{deriv},
\[
\inner{\J'(U_j)}{U_j}
=\norm{U_j}_W^2-\int_\Omega A^*U_j\cdot U_j\dd x
-\int_\Omega|U_j|^{2^*}\dd x
-\int_\Omega\nabla F(x,U_j)\cdot U_j\dd x.
\]
It follows from $\inner{\J'(U_j)}{U_j}=o(1)$,
\eqref{L2conv} with $U=0$, and
\eqref{vanishing} that
\begin{equation}\label{norm-eq}
\norm{U_j}_W^2-\int_\Omega|U_j|^{2^*}\dd x=o(1).
\end{equation}
By \eqref{norm-eq} and the boundedness of $\{U_j\}$ in $W$, we may
extract a further subsequence such that
\begin{equation}\label{L-def}
\norm{U_j}_W^2\to L,\qquad \int_\Omega|U_j|^{2^*}\dd x\to L
\end{equation}
for one and the same $L\ge0$. From $\J(U_j)\to c$, \eqref{L2conv} with $U=0$, and \eqref{vanishing},
\begin{equation}\label{c-L}
c=\left(\frac12-\frac{1}{2^*}\right)L=\frac{1}{N}L.
\end{equation}
Since $c>0$ by assumption, \eqref{c-L} gives $L=Nc>0$.

Applying Lemma \ref{lem:cc-system} with \eqref{norm-eq} and \eqref{L-def},
we get $L\ge S^{N/2}$. Therefore, by \eqref{c-L},
\[
c=\frac{1}{N}L\ge\frac{1}{N}S^{N/2},
\]
which contradicts the hypothesis $c<\frac{1}{N}S^{N/2}$. Hence $U\ne0$.

\medskip
Combining Steps 3 and 4, $U\ne0$ is a critical point of $\J$, hence a
nontrivial weak solution of \eqref{sys1}.
\end{proof}

\section{Mountain pass geometry}\label{sec:mp}

\begin{proposition}[Mountain pass geometry]\label{mp-geometry}
Assume \eqref{posdef} and (F1)--(F4). Then there exist $\rho>0$ and $\beta>0$ such that
\[
\J(U)\ge\beta\quad\text{for all }U\in W\text{ with }\norm{U}_W=\rho.
\]
Moreover, for any $U\in W\setminus\{0\}$, $\J(tU)\to-\infty$ as $t\to+\infty$.
\end{proposition}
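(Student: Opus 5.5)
For the first assertion I would combine the coercivity of the linear part with the small-amplitude growth estimate. By Lemma \ref{lem:positive}, the quadratic part of $\J$ is bounded below by $\frac{c_0}{2}\norm{U}_W^2$. By Lemma \ref{lem:growth2}, fixed $\eps>0$, we have $|F(x,U)|\le \eps|U|^2+d(\eps)|U|^{2^*}$. Using Poincar\'e, $\int_\Omega|U|^2\dd x\le\lambda_1^{-1}\norm{U}_W^2$. The system Sobolev inequality (Lemma \ref{lem:sys-sobolev}) gives $\int_\Omega|U|^{2^*}\dd x\le S^{-2^*/2}\norm{U}_W^{2^*}$. Together these yield
\[
\J(U)\ge\left(\frac{c_0}{2}-\frac{\eps}{\lambda_1}\right)\norm{U}_W^2-\left(\frac{1}{2^*}+d(\eps)\right)S^{-2^*/2}\norm{U}_W^{2^*}.
\]
Choosing $\eps=c_0\lambda_1/4$ makes the first coefficient $c_0/4>0$. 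Since $2^*>2$, there is a small $\rho>0$ for which the right-hand side at $\norm{U}_W=\rho$ is a positive number $\beta$. Explicitly, take $\rho$ with $C\rho^{2^*-2}\le c_0/8$, where $C$ is the second coefficient, and set $\beta=c_0\rho^2/8$.

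For the second assertion, fix $U\ne0$ and consider $t\to+\infty$. The critical term dominates, but the perturbation must be controlled with a coefficient strictly smaller than $\frac{1}{2^*}$. I would use \eqref{est2} from Lemma \ref{lem:growth} with $\eps=\frac{1}{2\cdot2^*}$, which gives
\[
\J(tU)\le\frac{t^2}{2}\Big(\norm{U}_W^2+\|A^*\|_{op}\|U\|_{L^2}^2\Big)-\frac{t^{2^*}}{2\cdot 2^*}\int_\Omega|U|^{2^*}\dd x+tM(\eps)\int_\Omega|U|\dd x.
\]
Here $\int_\Omega|U|^{2^*}\dd x>0$ because $U\ne0$ in $W$, and all norms are finite by Remark \ref{rem:W-embeddings}. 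Since $2^*>2>1$, the right-hand side tends to $-\infty$.

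The only point needing care is this choice of $\eps$ in \eqref{est2}: a fixed $\eps$ such as $\eps=1$ could exceed $1/2^*$ and destroy the sign of the leading term. The small-$\eps$ version of the growth estimate makes this harmless, so I do not expect a genuine obstacle. The remaining steps are the routine comparison of powers $t^2$, $t$ versus $t^{2^*}$ and $\rho^2$ versus $\rho^{2^*}$.
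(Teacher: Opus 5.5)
Your proposal is correct and follows essentially the same route as the paper: coercivity from Lemma \ref{lem:positive} combined with $|F(x,U)|\le\eps|U|^2+d(\eps)|U|^{2^*}$ from Lemma \ref{lem:growth2} gives the estimate on the sphere, and \eqref{est2} with $\eps<1/2^*$ gives $\J(tU)\to-\infty$. The differences are only cosmetic. You make the embedding constants explicit ($\lambda_1^{-1}$ via Poincar\'e and $S^{-2^*/2}$ via Lemma \ref{lem:sys-sobolev}) and fix $\eps$, $\rho$, $\beta$ concretely. You also bound the quadratic form above using $\|A^*\|_{op}$, whereas the paper keeps the form as it is.
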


\begin{proof} By Lemma \ref{lem:growth2}, for every $\eps>0$ there exists $d(\eps)>0$ with
\[
|F(x,U)|\le\eps|U|^2+d(\eps)|U|^{2^*}.
\]
Using \eqref{coercive} and the Sobolev embeddings $W\hookrightarrow L^2\times L^2$ (constant $C_2$) and $W\hookrightarrow L^{2^*}\times L^{2^*}$ (constant $C_{2^*}$),
\begin{align*}
\J(U)&\ge\frac12 c_0\norm{U}_W^2-\left(\frac{1}{2^*}+d(\eps)\right)C_{2^*}^{2^*}\norm{U}_W^{2^*}-\eps C_2^2\norm{U}_W^2\\
&=\left(\frac{c_0}{2}-\eps C_2^2\right)\norm{U}_W^2-\left(\frac{1}{2^*}+d(\eps)\right)C_{2^*}^{2^*}\norm{U}_W^{2^*}.
\end{align*}
Choose $\eps>0$ small enough that $\alpha:=\frac{c_0}{2}-\eps C_2^2>0$. Then for $\norm{U}_W=\rho$ small enough that
$$
\alpha-\left(\frac{1}{2^*}+d(\eps)\right)C_{2^*}^{2^*}\rho^{2^*-2}>0,
$$
we obtain
$$
\J(U)\ge\beta:=
\rho^2\left(\alpha-\left(\frac{1}{2^*}+d(\eps)\right)C_{2^*}^{2^*}\rho^{2^*-2}\right)>0.
$$

For $U$ nonzero, using \eqref{est2} in Lemma \ref{lem:growth},
\[
\J(tU)\le \frac{t^2}{2}\left(\norm{U}_W^2-\int_\Omega A^*U\cdot U\dd x\right)
-\left(\frac{1}{2^*}-\eps\right)t^{2^*}\int_\Omega|U|^{2^*}\dd x+M(\eps)t\int_\Omega|U|\dd x.
\]
Choose $\eps>0$ small enough that $\frac{1}{2^*}-\eps>0$. Since $2^*>2$ and the coefficient of $t^{2^*}$ is negative, $\J(tU)\to-\infty$ as $t\to+\infty$.
\end{proof}

\section{Proof of Theorem \ref{thm:abstract}}\label{sec:abstract}

\begin{proof}[Proof of Theorem \ref{thm:abstract}]
Assume the energy threshold condition \eqref{energy-cond} holds for some $U_0\in W\setminus\{0\}$. By Proposition \ref{mp-geometry}, $\J$ has the mountain pass geometry. Let $e=t_0 U_0$ with $t_0$ large enough that $\J(e)<0$. Define
\[
c=\inf_{\gamma\in\Gamma}\max_{t\in[0,1]}\J(\gamma(t)),\qquad
\Gamma=\{\gamma\in C([0,1],W):\gamma(0)=0,\ \gamma(1)=e\}.
\]

\medskip
\textbf{Claim 1: $c<\frac{1}{N}S^{N/2}$.}
The path $t\mapsto t t_0 U_0$ belongs to $\Gamma$, so \eqref{energy-cond} implies
\[
c\le\max_{t\in[0,1]}\J(t t_0 U_0)\le\sup_{t\ge0}\J(tU_0)<\frac{1}{N}S^{N/2}.
\]

\medskip
\textbf{Claim 2: $c\ge\beta>0$.}
For any $\gamma\in\Gamma$, the function $t\mapsto\norm{\gamma(t)}_W$ is continuous, $\norm{\gamma(0)}_W=0<\rho$, and $\norm{\gamma(1)}_W=\norm{e}_W>\rho$ (choosing $t_0$ large enough). Hence there exists $\bar t\in(0,1)$ with $\norm{\gamma(\bar t)}_W=\rho$. By Proposition \ref{mp-geometry},
\[
\max_{t\in[0,1]}\J(\gamma(t))\ge\J(\gamma(\bar t))\ge\beta.
\]
Taking the infimum over $\gamma\in\Gamma$ gives $c\ge\beta>0$.

\medskip
By a variant of the mountain pass theorem without the Palais--Smale condition (see \cite{AmbrosettiRabinowitz,BrezisNirenberg}), there exists a sequence $\{U_j\}\subset W$ such that
\[
\J(U_j)\to c,\qquad \norm{\J'(U_j)}_{W^*}\to0.
\]
By Claim 1 and Claim 2, we have
$$
0<c<\frac{1}{N}S^{N/2}.
$$
Hence Proposition \ref{PS-range} applies and gives a nontrivial critical
point $U\in W$ of $\J$. In particular, \eqref{sys1} admits a nontrivial weak solution
$U\in W$.
\end{proof}

\section{The case $N\ge4$: proof of Theorems \ref{thm:N4-F0} and
\ref{thm:N4-Fnot0}}\label{sec:N4}

\subsection{The test function and the reduction by symmetry}
\label{sec:N4-test}

Let $u_\eps$ be the Aubin--Talenti function from Section
\ref{sec:sobolev}, and let $\eta\in C_c^\infty(\Omega)$ with
$\eta\equiv1$ near $0$. Normalize
\begin{equation}\label{def-v-eps}
v_\eps:=\frac{w_\eps}{\|w_\eps\|_{L^{2^*}(\Omega)}},\quad
w_\eps:=\eta u_\eps,
\end{equation}
so that $\|v_\eps\|_{L^{2^*}(\Omega)}=1$.

\begin{remark}[Reduction by symmetry]\label{rem:reduction-symmetry}
Since $\max\{\lambda,\gamma\}>0$, at least one of $\lambda,\gamma$ is
positive. By the symmetry \eqref{symmetry}, we may assume without
loss of generality that $\lambda>0$, and use the test function
\[
U_\eps:=(v_\eps,0)\in W,
\]
which satisfies $U_\eps\ge0$, $U_\eps\ne0$, and
\[
\int_\Omega A^*U_\eps\cdot U_\eps\dd x
=\lambda\int_\Omega|v_\eps|^2\dd x.
\]
The case $\gamma>0$ follows by exchanging $(u,v)$ and
$(\lambda,\gamma)$ and using the test function $(0,v_\eps)$.
\end{remark}

\subsection{Classical estimates}\label{sec:N4-classical}

\begin{lemma}[Classical estimates]\label{lem:classical}
Let $v_\eps$ be as in \eqref{def-v-eps}. Then, as $\eps\to0^+$,
\begin{align}
\int_\Omega|\nabla v_\eps|^2\dd x&=S+O(\eps^{(N-2)/2}),
\label{est-grad}\\
\int_\Omega|v_\eps|^{2^*}\dd x&=1,\label{est-L2star}\\
\int_\Omega|v_\eps|^2\dd x&=
\begin{cases}
C\eps+O(\eps^{(N-2)/2}), & N>4,\\[1mm]
C\eps|\log\eps|+O(\eps), & N=4,
\end{cases}\label{est-L2}
\end{align}
where $C>0$ depends only on $N$.
\end{lemma}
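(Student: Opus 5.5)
The plan is to carry out the classical Brezis--Nirenberg computation for the unnormalized function $w_\eps=\eta u_\eps$ and then divide by the appropriate power of $\|w_\eps\|_{L^{2^*}}$. Identity \eqref{est-L2star} is immediate from the normalization \eqref{def-v-eps}. For the rest we use the fact that $\eta\equiv1$ on some ball $B_r$ and that $\eta$ is smooth with compact support. Throughout we use the scaling $x=\sqrt{\eps}\,y$, under which $u_\eps(x)=\eps^{-(N-2)/2}U_1(y)$ with $U_1(y)=(1+|y|^2)^{-(N-2)/2}$.

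\textbf{Step 1 (gradient and $L^{2^*}$ norms of $w_\eps$).} We split the integrals into $B_r$ and $\Omega\setminus B_r$. On $\Omega\setminus B_r$ both $u_\eps$ and $\nabla u_\eps$ are bounded uniformly in $\eps$, and $\eta$ is smooth. Hence
\[
\int_\Omega|\nabla w_\eps|^2\dd x=\int_{\R^N}|\nabla u_\eps|^2\dd x+O(1).
\]
Scaling then gives
\[
\int_{\R^N}|\nabla u_\eps|^2\dd x=K_1\eps^{-(N-2)/2},\qquad
\int_\Omega |w_\eps|^{2^*}\dd x=K_2\eps^{-N/2}+O(1),
\]
where $K_1=\|\nabla U_1\|_{L^2(\R^N)}^2$ and $K_2=\|U_1\|_{L^{2^*}(\R^N)}^{2^*}$. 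Because $U_1$ is an extremal for the Sobolev inequality on $\R^N$, we have $K_1/K_2^{2/2^*}=S$. This uses that $S$ on $\Omega$ coincides with the $\R^N$ constant, which is the standard fact implicit in Section~\ref{sec:sobolev}. We then write
\[
\int_\Omega|\nabla v_\eps|^2\dd x=\frac{K_1\eps^{-(N-2)/2}+O(1)}{\left(K_2\eps^{-N/2}+O(1)\right)^{(N-2)/N}}.
\]
Factoring out $\eps^{-(N-2)/2}$ and expanding $(1+O(\eps^{N/2}))^{-(N-2)/N}$ yields $S+O(\eps^{(N-2)/2})$, which is \eqref{est-grad}.

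\textbf{Step 2 ($L^2$ norm).} Again the contribution of $\Omega\setminus B_r$ is $O(1)$, so $\int_\Omega w_\eps^2\dd x=\int_{B_r}u_\eps^2\dd x+O(1)$. After scaling,
\[
\int_{B_r}u_\eps^2\dd x=\eps^{-(N-4)/2}\int_{B_{r/\sqrt\eps}}\frac{\dd y}{(1+|y|^2)^{N-2}}.
\]
The integrand is integrable on $\R^N$ precisely when $2(N-2)>N$, that is, when $N>4$.

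For $N>4$ the tail beyond $r/\sqrt\eps$ is $O(\eps^{(N-4)/2})$. This gives $\int_\Omega w_\eps^2\dd x=K_3\eps^{-(N-4)/2}+O(1)$.

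For $N=4$ the integrand behaves like $|y|^{-4}$, so the integral equals $\omega_3\log(r/\sqrt\eps)+O(1)=\frac{\omega_3}{2}|\log\eps|+O(1)$.

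Dividing by $\|w_\eps\|_{L^{2^*}}^2=K_2^{(N-2)/N}\eps^{-(N-2)/2}(1+O(\eps^{N/2}))$ multiplies by $\eps^{(N-2)/2}$, which gives the following.
\begin{itemize}
\item For $N>4$: $\int_\Omega v_\eps^2\dd x=C\eps+O(\eps^{(N-2)/2})$, with $C=K_3K_2^{-(N-2)/N}$.
\item For $N=4$: $\int_\Omega v_\eps^2\dd x=C\eps|\log\eps|+O(\eps)$.
\end{itemize}
This is \eqref{est-L2}.

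\textbf{Main obstacle.} The delicate point is bookkeeping of the error terms. The main one is the claim that the tail and cutoff contributions are $O(1)$ uniformly in $\eps$, which holds because $|x|\ge r$ keeps $u_\eps$ bounded. We also need the precise logarithmic constant for $N=4$. Neither step requires anything beyond the direct estimates above.
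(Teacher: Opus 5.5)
Your proposal is correct and follows the same route as the paper: obtain the raw Brezis--Nirenberg asymptotics for $w_\eps=\eta u_\eps$, then divide by $\|w_\eps\|_{L^{2^*}}^2$ to get \eqref{est-grad} and \eqref{est-L2}, with \eqref{est-L2star} coming from the normalization. The only difference is that you derive these asymptotics yourself via the scaling $x=\sqrt{\eps}\,y$, normalizing $K_2$ as $\|U_1\|_{L^{2^*}}^{2^*}$ so that $S=K_1/K_2^{2/2^*}$, whereas the paper simply quotes Lemma 1.1 of Brezis--Nirenberg in the form $K_1/K_2=S$.
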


\begin{proof}
By the classical estimates \cite[Lemma 1.1]{BrezisNirenberg}, as
$\eps\to0^+$,
\begin{align}
\int_\Omega|\nabla w_\eps|^2\dd x&=\frac{K_1}{\eps^{(N-2)/2}}+O(1),
\label{est-grad-raw}\\
\|w_\eps\|_{L^{2^*}(\Omega)}^2&=\frac{K_2}{\eps^{(N-2)/2}}+O(\eps),
\label{est-L2star-raw}\\
\int_\Omega|w_\eps|^2\dd x&=
\begin{cases}
\dfrac{K_3}{\eps^{(N-4)/2}}+O(1), & N>4,\\[2mm]
K_3|\log\eps|+O(1), & N=4,
\end{cases}
\label{est-L2-raw}
\end{align}
where $K_1,K_2,K_3>0$ depend only on $N$ and $K_1/K_2=S$.

\emph{Verification of \eqref{est-grad}.}
Since $v_\eps=w_\eps/\|w_\eps\|_{L^{2^*}(\Omega)}$,
\[
\int_\Omega|\nabla v_\eps|^2\dd x
=\frac{\int_\Omega|\nabla w_\eps|^2\dd x}
{\|w_\eps\|_{L^{2^*}(\Omega)}^2}
=\frac{K_1\eps^{-(N-2)/2}+O(1)}
{K_2\eps^{-(N-2)/2}+O(\eps)}.
\]
Factoring out $K_j\eps^{-(N-2)/2}$ from the numerator and denominator,
\[
=\frac{K_1\eps^{-(N-2)/2}\left(1+O(\eps^{(N-2)/2})\right)}
{K_2\eps^{-(N-2)/2}\left(1+O(\eps^{N/2})\right)}
=\frac{K_1}{K_2}\cdot\frac{1+O(\eps^{(N-2)/2})}
{1+O(\eps^{N/2})}.
\]
Using $(1+O(\eps^{N/2}))^{-1}=1+O(\eps^{N/2})$ and $K_1/K_2=S$,
\[
\int_\Omega|\nabla v_\eps|^2\dd x
=S\left(1+O(\eps^{(N-2)/2})\right)\left(1+O(\eps^{N/2})\right)
=S+O(\eps^{(N-2)/2}),
\]
where the last step uses $(N-2)/2<N/2$.

\emph{Verification of \eqref{est-L2star}.}
This is the normalization condition \eqref{def-v-eps}:
\[
\int_\Omega|v_\eps|^{2^*}\dd x=1.
\]

\emph{Verification of \eqref{est-L2}.}
Again by \eqref{def-v-eps},
\[
\int_\Omega|v_\eps|^2\dd x
=\frac{\int_\Omega|w_\eps|^2\dd x}
{\|w_\eps\|_{L^{2^*}(\Omega)}^2}.
\]
For $N>4$, using \eqref{est-L2-raw} and \eqref{est-L2star-raw},
\[
\int_\Omega|v_\eps|^2\dd x
=\frac{K_3\eps^{-(N-4)/2}+O(1)}
{K_2\eps^{-(N-2)/2}+O(\eps)}
=\frac{K_3}{K_2}\eps\cdot\frac{1+O(\eps^{(N-4)/2})}
{1+O(\eps^{N/2})}
=C\eps+O(\eps^{(N-2)/2}),
\]
where $C:=K_3/K_2>0$. For $N=4$,
\begin{equation}\label{eq:N4-L2}
\begin{aligned}
\int_\Omega|v_\eps|^2\,dx
=\frac{K_3|\log\eps|+O(1)}{K_2\eps^{-1}+O(\eps)}
&=\frac{K_3|\log\eps|\left(1+O\left(\frac1{|\log\eps|}\right)\right)}
{K_2\eps^{-1}\left(1+O(\eps^2)\right)}\\
&=C\eps|\log\eps|\,
\frac{1+O\left(\frac1{|\log\eps|}\right)}{1+O(\eps^2)}\\
&=C\eps|\log\eps|\left(1+O\left(\frac1{|\log\eps|}\right)\right)
\left(1+O(\eps^2)\right)\\
&=C\eps|\log\eps|+O(\eps),
\end{aligned}
\end{equation}
where $C:=\frac{K_3}{K_2}>0$.
\end{proof}

\subsection{Proof of Theorem \ref{thm:N4-F0} (the unperturbed case
$F\equiv0$)}\label{sec:N4-F0}

\begin{proof}[Proof of Theorem \ref{thm:N4-F0}]
Let $U_\eps:=(v_\eps,0)$ be as in \eqref{def-v-eps}. Recall from \eqref{def-v-eps} and Remark \ref{rem:reduction-symmetry}
that $U_\eps=(v_\eps,0)$ satisfies
$\int_\Omega|v_\eps|^{2^*}\dd x=1$ and
$\int_\Omega A^*U_\eps\cdot U_\eps\dd x=\lambda\int_\Omega|v_\eps|^2\dd x$.
Hence, by $F\equiv0$, we have
\begin{equation}\label{JA-121}
\J(tU_\eps)=\frac{t^2}{2}A_\eps-\frac{t^{2^*}}{2^*},
\qquad
A_\eps:=\int_\Omega|\nabla v_\eps|^2\dd x
-\lambda\int_\Omega|v_\eps|^2\dd x,
\end{equation}
where we used $\int_\Omega|v_\eps|^{2^*}\dd x=1$.
By Lemma \ref{lem:classical},
\begin{equation}\label{AS-01}
A_\eps=
\begin{cases}
S-\lambda C\eps+O(\eps^{(N-2)/2}), & N>4,\\[1mm]
S-\lambda C\eps|\log\eps|+O(\eps), & N=4.
\end{cases}
\end{equation}
Hence, there exists $\eps_0>0$ such that
\begin{equation}\label{Aeps-less-S}
A_\eps<S\qquad\text{for all }0<\eps<\eps_0.
\end{equation}

Let
$g(t):=\frac{t^2}{2}A_\eps-\frac{t^{2^*}}{2^*}$ for $t\ge0$. Since
$A_\eps>0$ for $\eps>0$ small, the derivative
\[
g'(t)=t\bigl(A_\eps-t^{2^*-2}\bigr)
\]
vanishes at $t=0$ (a local minimum) and at
$t_\eps:=A_\eps^{1/(2^*-2)}$. Moreover,
$$
g''(t_\eps)=-(2^*-2)A_\eps<0,
$$
so $t_\eps$ is the global maximum
point (as $g(0)=0$ and $g(t)\to-\infty$ as $t\to+\infty$). Hence
\begin{equation}\label{Jeps-011}
\sup_{t\ge0}\J(tU_\eps)=g(t_\eps)
=A_\eps^{\frac{2^*}{2^*-2}}\left(\frac12-\frac{1}{2^*}\right)
=\frac1N A_\eps^{N/2},
\end{equation}
where we used $\frac{2^*}{2^*-2}=\frac N2$ and
$\frac12-\frac{1}{2^*}=\frac1N$. By \eqref{Aeps-less-S} and \eqref{Jeps-011},
\[
\sup_{t\ge0}\J(tU_\eps)=\frac1N A_\eps^{N/2}<\frac1N S^{N/2}\qquad\text{for all }0<\eps<\eps_0.
\]
Therefore the energy threshold condition \eqref{energy-cond} holds, and Theorem
\ref{thm:abstract} yields that \eqref{sys1} admits a nontrivial weak solution
$U\in W$.
\end{proof}

\subsection{The $L^{q_1}$ estimate}\label{sec:N4-Lq}

\begin{lemma}[The $L^{q_1}$ estimate for $N>4$]\label{lem:Lq}
Let $v_\eps$ be as in \eqref{def-v-eps}, and let $q_1\in\left(\dfrac{4}{N-2},2\right)$ be as in (F5).
Then, as $\eps\to0^+$,
\begin{equation}\label{eq:Lq-q1}
\int_\Omega|v_\eps|^{q_1}\dd x=o(\eps).
\end{equation}
\end{lemma}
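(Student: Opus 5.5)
The plan is to split $\int_\Omega|v_\eps|^{q_1}\dd x$ into the unnormalized integral of $w_\eps=\eta u_\eps$ and the normalizing factor. By \eqref{est-L2star-raw}, $\|w_\eps\|_{L^{2^*}}^{q_1}=\bigl(K_2\eps^{-(N-2)/2}+O(\eps)\bigr)^{q_1/2}\sim K_2^{q_1/2}\eps^{-(N-2)q_1/4}$. Hence
\[
\int_\Omega|v_\eps|^{q_1}\dd x\sim C\,\eps^{(N-2)q_1/4}\int_\Omega|w_\eps|^{q_1}\dd x .
\]
It then suffices to show that $\int_\Omega|w_\eps|^{q_1}\dd x$ is bounded uniformly in $\eps$ (or at least $o(\eps^{1-(N-2)q_1/4})$). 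Once that is established, $q_1>\frac{4}{N-2}$ gives $(N-2)q_1/4>1$, and therefore $\eps^{(N-2)q_1/4}=o(\eps)$.

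For the unnormalized integral, use $0\le\eta\le C$, $\operatorname{supp}\eta\subset\Omega\subset B_R(0)$, and the pointwise bound
\[
u_\eps(x)=(\eps+|x|^2)^{-(N-2)/2}\le|x|^{-(N-2)}.
\]
This gives
\[
\int_\Omega|w_\eps|^{q_1}\dd x\le C\int_{B_R}|x|^{-(N-2)q_1}\dd x = C'\int_0^R r^{N-1-(N-2)q_1}\dd r .
\]
The last integral is finite precisely when $(N-2)q_1<N$. This holds because $q_1<2$ gives $(N-2)q_1<2N-4$; that alone is not enough, so check directly that $(N-2)q_1<N$ is equivalent to $q_1<\frac{N}{N-2}$.

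Here lies the main obstacle. The hypothesis $q_1<2$ does not imply $q_1<\frac{N}{N-2}$ once $N>4$, since then $\frac{N}{N-2}<2$. So the uniform bound holds only for $q_1<\frac{N}{N-2}$. For $q_1\ge\frac{N}{N-2}$, perform the radial computation honestly via the substitution $r=\sqrt\eps\, s$:
\[
\int_{B_R}(\eps+|x|^2)^{-(N-2)q_1/2}\dd x=\eps^{\frac N2-\frac{(N-2)q_1}{2}}\int_0^{R/\sqrt\eps}\frac{s^{N-1}}{(1+s^2)^{(N-2)q_1/2}}\dd s .
\]
When $(N-2)q_1>N$ the $s$-integral converges, and the result is $O(\eps^{N/2-(N-2)q_1/2})$. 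When $(N-2)q_1=N$ it is $O(|\log\eps|)$.

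Combining with the normalization in the supercritical range yields
\[
\int_\Omega|v_\eps|^{q_1}\dd x=O\bigl(\eps^{\frac N2-\frac{(N-2)q_1}{4}}\bigr),
\]
and $\frac N2-\frac{(N-2)q_1}{4}>1$ holds because $q_1<2<\frac{2(N-2)}{N-2}\cdot\frac{N}{N-2}\cdot\frac{N-2}{N}$; more simply, it is equivalent to $(N-2)q_1<2N-4$, i.e.\ $q_1<2$. In the borderline case one obtains $O(\eps^{(N-2)q_1/4}|\log\eps|)=O(\eps^{N/4}|\log\eps|)$, which is $o(\eps)$ since $N>4$.

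Putting the three cases together proves \eqref{eq:Lq-q1}: $q_1<\frac{N}{N-2}$ uses the lower bound $q_1>\frac{4}{N-2}$; $q_1=\frac{N}{N-2}$ uses $N>4$; and $q_1>\frac{N}{N-2}$ uses the upper bound $q_1<2$.
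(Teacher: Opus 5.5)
Your proposal is correct and follows essentially the same route as the paper. You normalize via \eqref{est-L2star-raw}, split into the three cases $q_1<\frac{N}{N-2}$, $q_1=\frac{N}{N-2}$, $q_1>\frac{N}{N-2}$ with the scaling $r=\eps^{1/2}s$, and obtain the exponents $\frac{(N-2)q_1}{4}$, $\frac N4$ (with a $|\log\eps|$ factor) and $\frac N2-\frac{(N-2)q_1}{4}$ exactly as the paper does. Your pointwise bound $u_\eps\le|x|^{-(N-2)}$ is a slightly cleaner way than the paper's asymptotic computation to get $\int_\Omega|w_\eps|^{q_1}\dd x=O(1)$ in the first case. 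You should delete the garbled chain ``$q_1<2<\frac{2(N-2)}{N-2}\cdots$'' (its right-hand side equals $2$), since the equivalence with $(N-2)q_1<2N-4$ that follows it is all you need.
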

\begin{proof} Fix $\delta>0$ so small that $B_{2\delta}(0)\subset\subset\Omega$ and
$\eta\equiv1$ on $B_\delta(0)$. Splitting the integral,
\begin{equation}\label{eq:Lq-split}
\int_\Omega|w_\eps|^{q_1}\,dx
=\underbrace{\int_{B_\delta(0)}|u_\eps|^{q_1}\,dx}_{=:I_1}
+\underbrace{\int_{\Omega\setminus B_\delta(0)}|\eta u_\eps|^{q_1}\,dx}_{=:I_2}.
\end{equation}

\emph{Estimate of $I_1$.}
By polar coordinates $x=r\omega$, $r=|x|$,
$dx=r^{N-1}dr\,d\omega$, where $\omega_N$ is the area of $S^{N-1}$,
\[
I_1=\omega_N\int_0^\delta
(\eps+r^2)^{-\frac{(N-2)q_1}{2}}r^{N-1}\,dr.
\]
Perform the change of variables $r=\eps^{1/2}s$, $dr=\eps^{1/2}ds$,
so that $r=\delta\Leftrightarrow s=\delta\eps^{-1/2}$, and
\[
r^{N-1}=\eps^{\frac{N-1}{2}}s^{N-1},\qquad
\eps+r^2=\eps(1+s^2).
\]
Therefore
\[
(\eps+r^2)^{-\frac{(N-2)q_1}{2}}
=\eps^{-\frac{(N-2)q_1}{2}}(1+s^2)^{-\frac{(N-2)q_1}{2}},
\]
and hence
\begin{equation}\label{eq:I1-main}
I_1=\omega_N\eps^{\frac N2-\frac{(N-2)q_1}{2}}
\int_0^{\delta\eps^{-1/2}}
(1+s^2)^{-\frac{(N-2)q_1}{2}}s^{N-1}\,ds.
\end{equation}

\emph{Estimate of $I_2$.}
Since $|x|\ge\delta$ on $\Omega\setminus B_\delta(0)$,
\[
u_\eps(x)=(\eps+|x|^2)^{-\frac{N-2}{2}}
\le(\delta^2)^{-\frac{N-2}{2}}=\delta^{-(N-2)}.
\]
Since $\eta\in L^\infty(\Omega)$,
\[
|\eta(x)u_\eps(x)|^{q_1}
\le\|\eta\|_{L^\infty}^{q_1}\delta^{-(N-2)q_1},
\]
and therefore
\begin{equation}\label{eq:I2-main}
I_2\le\|\eta\|_{L^\infty}^{q_1}\delta^{-(N-2)q_1}|\Omega|=O(1).
\end{equation}

Note that $q_1\in\left(\dfrac{4}{N-2},2\right)$ and
$\dfrac{4}{N-2}<\dfrac{N}{N-2}<2$. We distinguish three cases.

\smallskip

\textbf{Case 1: $\dfrac{N}{N-2}<q_1<2$.}
Then the integrand in \eqref{eq:I1-main} behaves like
$s^{-(N-2)q_1+N-1}$ as $s\to+\infty$, with exponent
\begin{equation}\label{eq:qn-main}
-(N-2)q_1+N-1<-1
\iff q_1>\frac{N}{N-2}.
\end{equation}
Hence the integral converges, and setting
\[
K_{q_1}:=\omega_N\int_0^\infty
(1+s^2)^{-\frac{(N-2)q_1}{2}}s^{N-1}\,ds>0,
\]
we obtain, for $R=\delta\eps^{-1/2}$,
\[
\int_0^{\delta\eps^{-1/2}}
(1+s^2)^{-\frac{(N-2)q_1}{2}}s^{N-1}\,ds
=\frac{K_{q_1}}{\omega_N}+o(1).
\]
Inserting this into \eqref{eq:I1-main} gives
\begin{equation}\label{eq:norm-I1}
I_1=K_{q_1}\,\eps^{\frac N2-\frac{(N-2)q_1}{2}}
+o\!\left(\eps^{\frac N2-\frac{(N-2)q_1}{2}}\right).
\end{equation}
Hence, \eqref{eq:Lq-split} and \eqref{eq:I2-main}--\eqref{eq:norm-I1} imply that
\begin{equation}\label{eq:norm-w-q0}
\int_\Omega|w_\eps|^{q_1}\,dx=I_1+I_2
=K_{q_1}\,\eps^{\frac N2-\frac{(N-2)q_1}{2}}
+o\!\left(\eps^{\frac N2-\frac{(N-2)q_1}{2}}\right),
\end{equation}
where we used $\frac N2-\frac{(N-2)q_1}{2}<0$. By \eqref{est-L2star-raw},
\[
\|w_\eps\|_{L^{2^*}(\Omega)}^2
=K_2\,\eps^{-\frac{N-2}{2}}\bigl(1+O(\eps^{N/2})\bigr),
\]
and raising both sides to the power $q_1/2$ gives
\begin{equation}\label{eq:norm-w-q1}
\|w_\eps\|_{L^{2^*}(\Omega)}^{q_1}
=K_2^{q_1/2}\,\eps^{-\frac{(N-2)q_1}{4}}
\bigl(1+O(\eps^{N/2})\bigr),
\end{equation}
where we used the Taylor expansion
$(1+u)^{q_1/2}=1+\frac{q_1}{2}u+O(u^2)$ as $u\to0$, applied to
$u=O(\eps^{N/2})$, together with $O(u^2)=O(\eps^N)\subset O(\eps^{N/2})$.
Combining \eqref{eq:norm-w-q0} and \eqref{eq:norm-w-q1},
\[
\int_\Omega|v_\eps|^{q_1}\,dx
=\frac{K_{q_1}}{K_{2}^{q_1/2}}
\,\eps^{\gamma(q_1)}\bigl(1+o(1)\bigr),
\qquad
\gamma(q_1)=\frac{2N-(N-2)q_1}{4}.
\]
Since $q_1<2$,
\[
\gamma(q_1)>\frac{2N-(N-2)\cdot2}{4}=1,
\]
so $\int_\Omega|v_\eps|^{q_1}\,dx=o(\eps)$.

\smallskip

\textbf{Case 2: $q_1=\dfrac{N}{N-2}$.}
Then $(N-2)q_1=N$, so the integrand in \eqref{eq:I1-main} becomes
$(1+s^2)^{-N/2}s^{N-1}$ as $s\to+\infty$. Splitting at $s=1$, and using
\[
s^{-1}\lesssim(1+s^2)^{-\frac N2}s^{N-1}\lesssim s^{-1}
\qquad(s\ge1).
\]
Splitting the integral at $s=1$,
\[
\int_0^{\delta\eps^{-1/2}}(1+s^2)^{-\frac N2}s^{N-1}\,ds
=\int_0^1(\cdots)\dd s+\int_1^{\delta\eps^{-1/2}}(\cdots)\dd s.
\]
The first integral is bounded by $1$, since $(1+s^2)^{-N/2}s^{N-1}
\le 1$ for $0<s\le1$. For $s>1$, we have $s^2\le 1+s^2\le 2s^2$,
hence
\[
2^{-N/2}s^{-1}\le(1+s^2)^{-\frac N2}s^{N-1}\le s^{-1}.
\]
Therefore the second integral satisfies
\[
\int_1^{\delta\eps^{-1/2}}(1+s^2)^{-\frac N2}s^{N-1}\dd s
\le\int_1^{\delta\eps^{-1/2}}\frac{\dd s}{s}
=\frac12|\log\eps|+\log\delta=O(|\log\eps|).
\]
Combining the two estimates,
\[
\int_0^{\delta\eps^{-1/2}}(1+s^2)^{-\frac N2}s^{N-1}\,ds
=O(|\log\eps|).
\]
Inserting this into \eqref{eq:I1-main}, and using $q_1=\dfrac{N}{N-2}$ and
$\frac N2-\frac{(N-2)q_1}{2}=0$ give
\[
I_1=O(|\log\eps|),
\]
it follows from \eqref{eq:Lq-split} and \eqref{eq:I2-main} that
\[
\int_\Omega|w_\eps|^{q_1}\,dx=I_1+I_2=O(|\log\eps|).
\]
Dividing by \eqref{eq:norm-w-q1} and using $q_1=N/(N-2)$,
\[
\int_\Omega|v_\eps|^{q_1}\,dx
=O\!\left(\eps^{N/4}|\log\eps|\right).
\]
Since $N>4$, we have $N/4>1$, and therefore
$\eps^{N/4}|\log\eps|=o(\eps)$. Hence
$$\int_\Omega|v_\eps|^{q_1}\,dx=o(\eps).
$$

\smallskip

\textbf{Case 3: $\dfrac{4}{N-2}<q_1<\dfrac{N}{N-2}$.}
Then $(N-2)q_1<N$, so the integrand in \eqref{eq:I1-main} behaves
like $s^{-(N-2)q_1+N-1}$ as $s\to+\infty$, with exponent
\[
-(N-2)q_1+N-1>-1
\iff q_1<\frac{N}{N-2}.
\]
Hence the integral $\int_1^\infty s^{-(N-2)q_1+N-1}\dd s$ diverges
polynomially. Splitting at $s=1$ and integrating,
\[
\int_1^{\delta\eps^{-1/2}}s^{-(N-2)q_1+N-1}\dd s
=\frac{(\delta\eps^{-1/2})^{\,N-(N-2)q_1}-1}{N-(N-2)q_1}
\sim\frac{\delta^{\,N-(N-2)q_1}}{N-(N-2)q_1}
\eps^{\frac{(N-2)q_1-N}{2}},
\]
where we used $(N-2)q_1-N<0$ (because
$q_1<N/(N-2)$). The integral over $0<s\le1$ is $O(1)$ and is of
lower order (since $(N-2)q_1-N<0$, so the main term diverges as
$\eps\to0^+$). Hence
\[
\int_0^{\delta\eps^{-1/2}}
(1+s^2)^{-\frac{(N-2)q_1}{2}}s^{N-1}\,ds
\sim C\eps^{\frac{(N-2)q_1-N}{2}},
\]
with $C:=\dfrac{\delta^{\,N-(N-2)q_1}}{N-(N-2)q_1}>0$.
Inserting this into \eqref{eq:I1-main} yields
\[
I_1\sim C\eps^{\frac N2-\frac{(N-2)q_1}{2}}
\cdot\eps^{\frac{(N-2)q_1-N}{2}}
=C.
\]
Hence $I_1=O(1)$, it follows from \eqref{eq:Lq-split} and \eqref{eq:I2-main} that
\[
\int_\Omega|w_\eps|^{q_1}\,dx=I_1+I_2=O(1).
\]
Dividing by \eqref{eq:norm-w-q1},
\[
\int_\Omega|v_\eps|^{q_1}\,dx
=O\!\left(\eps^{\frac{(N-2)q_1}{4}}\right).
\]
Since $q_1>\dfrac{4}{N-2}$ by (F5), we have
$\frac{(N-2)q_1}{4}>1$, and therefore
\[
\int_\Omega|v_\eps|^{q_1}\,dx
=O\!\left(\eps^{\frac{(N-2)q_1}{4}}\right)=o(\eps).
\]

Therefore, in all cases we obtain \eqref{eq:Lq-q1}.
\end{proof}

\subsection{Proof of Theorem \ref{thm:N4-Fnot0} (the perturbed case
$F\not\equiv0$)}\label{sec:N4-Fnot0}

\begin{proof}[Proof of Theorem \ref{thm:N4-Fnot0}]
We divide the proof into two cases: $N=4$ and $N>4$.

\smallskip

\textbf{Part 1: $N=4$.}
Since $F\ge0$ by (F5), for $U_\eps=(v_\eps,0)$, similar to \eqref{JA-121}, we have
\begin{equation}\label{eq:JlowerA}
\J(tU_\eps)\le\frac{t^2}{2}A_\eps-\frac{t^{2^*}}{2^*},
\qquad
A_\eps:=\int_\Omega|\nabla v_\eps|^2\dd x
-\lambda\int_\Omega|v_\eps|^2\dd x,
\end{equation}
where $\int_\Omega|v_\eps|^{2^*}\dd x=1$. By Lemma \ref{lem:classical} (or see \eqref{AS-01}),
\[
A_\eps=S-\lambda C\eps|\log\eps|+O(\eps).
\]
Hence $A_\eps<S$ for $\eps>0$ sufficiently small.

As in the proof of
Theorem \ref{thm:N4-F0}, the function
\[
g(t):=\frac{t^2}{2}A_\eps-\frac{t^{2^*}}{2^*},\qquad t\ge0
\]
attains its maximum over $t\ge0$ at
$t_\eps=A_\eps^{1/(2^*-2)}$ with value
\[
\max_{t\ge0}g(t)=\frac1N A_\eps^{N/2}.
\]
Thus, it follows from \eqref{eq:JlowerA} and the fact $A_\eps<S$ (when $\eps>0$ sufficiently small) that
\[
\sup_{t\ge0}\J(tU_\eps)\le\max_{t\ge0}g(t)
=\frac1N A_\eps^{N/2}<\frac1N S^{N/2}.
\]
Therefore, the energy threshold condition \eqref{energy-cond} holds for $N=4$.

\smallskip

\textbf{Part 2: $N>4$.}
Similar to the above arguments (with $A_\eps$ defined as in Part 1), we have
\[
\J(tU_\eps)=\frac{t^2}{2}A_\eps-\frac{t^{2^*}}{2^*}
-\int_\Omega F(x,tv_\eps,0)\dd x,\qquad t\ge0.
\]
For $t$ large, the leading term is $-\frac{t^{2^*}}{2^*}$, so
$\J(tU_\eps)\to-\infty$ uniformly in $\eps$. Hence there exists
$C_0>0$, independent of $\eps$ and large enough so that
$S^{1/(2^*-2)}<C_0$, with the property that $\J(tU_\eps)<0$ for all
$t>C_0$. Consequently, it follows from $\J(0)=0$ that
\begin{equation}\label{eq:supJ1}
\sup_{t\ge0}\J(tU_\eps)
=\sup_{0\le t\le C_0}\J(tU_\eps).
\end{equation}

By (F5) with $4/(N-2)<q_1<2$ and $t\le C_0$,
\[
-\int_\Omega F(x,tv_\eps,0)\dd x
\le C_1t^{q_1}\int_\Omega|v_\eps|^{q_1}\dd x
\le C_1C_0^{q_1}\int_\Omega|v_\eps|^{q_1}\dd x.
\]
By Lemma \ref{lem:Lq},
$\int_\Omega|v_\eps|^{q_1}\dd x=o(\eps)$, so
\[
-\int_\Omega F(x,tv_\eps,0)\dd x\le o(\eps)
\qquad\text{uniformly in }t\in[0,C_0].
\]
Therefore, for all $t\in[0,C_0]$,
\begin{equation}\label{eq:supJ2}
\J(tU_\eps)\le\frac{t^2}{2}A_\eps-\frac{t^{2^*}}{2^*}+o(\eps)
\le\frac1N A_\eps^{N/2}+o(\eps),
\end{equation}
where we used the elementary maximum
$\max_{t\ge0}\bigl(\frac{t^2}{2}A_\eps-\frac{t^{2^*}}{2^*}\bigr)
=\frac1N A_\eps^{N/2}$, attained at
$$
t_\eps=A_\eps^{1/(2^*-2)}\to S^{1/(2^*-2)}<C_0,
$$
for $\eps>0$ small.
Combining with \eqref{eq:supJ1} and \eqref{eq:supJ2} gives
\begin{equation}\label{eq:JtU-01}
\sup_{t\ge0}\J(tU_\eps)\le\frac1N A_\eps^{N/2}+o(\eps).
\end{equation}
By Lemma \ref{lem:classical} (or see \eqref{AS-01}),
$$
A_\eps=S-\lambda C\eps+o(\eps), \quad C>0, \ \lambda>0,
$$
we have
\begin{equation}\label{eq:AS-01}
S-A_\eps\ge\frac{\lambda C}{2}\eps
\end{equation}
for $\eps>0$ sufficiently
small. By the mean value theorem applied to the function
$s\mapsto s^{N/2}$ on the interval $[A_\eps,S]$,
\begin{equation}\label{eq:AS-02}
S^{N/2}-A_\eps^{N/2}
=\frac{N}{2}\theta_\eps^{\frac N2-1}(S-A_\eps)
\end{equation}
for some $\theta_\eps\in(A_\eps,S)$. Since $A_\eps\to S$, we have
$\theta_\eps\ge S/2$ for $\eps>0$ sufficiently small, it follows from \eqref{eq:AS-01} and \eqref{eq:AS-02} that
\[
S^{N/2}-A_\eps^{N/2}
\ge\frac{N}{2}\left(\frac S2\right)^{\frac N2-1}(S-A_\eps)
\ge\frac{N\lambda C}{4}\left(\frac S2\right)^{\frac N2-1}\eps.
\]
Setting
\[
c_2:=\frac{N\lambda C}{4}\left(\frac S2\right)^{\frac N2-1}>0,
\]
we obtain
$$
A_\eps^{N/2}\le S^{N/2}-c_2\eps,
$$
for $\eps>0$
sufficiently small. Thus, it follows from \eqref{eq:JtU-01} that for $\eps>0$
sufficiently small,
\[
\sup_{t\ge0}\J(tU_\eps)\le\frac1N S^{N/2}-\frac1N c_2\eps+o(\eps)
<\frac1N S^{N/2}.
\]
Thus the energy threshold condition \eqref{energy-cond} holds for $N>4$.

\smallskip

In both cases, the case $\gamma>0$ follows by symmetry in
Remark \ref{rem:reduction-symmetry}. Hence, Theorem \ref{thm:N4-Fnot0} follows from
Theorem \ref{thm:abstract}.
\end{proof}

\medskip
\begin{remark}[The threshold is the scalar Sobolev constant]
\label{rem:no-SW}
One might be tempted to define a system constant
\[
S_W=\inf_{U\in W\setminus\{0\}}
\frac{\|U\|_W^2-\int_\Omega A^*U\cdot U\dd x}{\|U\|_{L^{2^*}}^2}.
\]
However, $S_W$ is \emph{not} equal to $S$ in general. Indeed, taking
$U=(v_\eps,0)$, and using Lemma \ref{lem:classical} (or see \eqref{AS-01}) and $\|v_\eps\|_{L^{2^*}}=1$, the quotient becomes
\[
\frac{\int_\Omega|\nabla v_\eps|^2\dd x-\lambda\int_\Omega|v_\eps|^2\dd x}
{\|v_\eps\|_{L^{2^*}}^2}
=
\begin{cases}
S-\lambda C\eps+o(\eps), & N>4,\\[1mm]
S-\lambda C\eps|\log\eps|+o(\eps|\log\eps|), & N=4.
\end{cases}
\]
where we used
$O(\eps^{(N-2)/2})=o(\eps)$ for $N>4$, and
$O(\eps)=o(\eps|\log\eps|)$ for $N=4$.
In both cases the quotient is strictly less than $S$ for $\eps>0$
sufficiently small, since $\lambda>0$ and $C>0$. Since $S_W$ is the
infimum over all $U\in W\setminus\{0\}$, and the quotient above
corresponds to the particular choice $U=(v_\eps,0)$, we conclude that
$S_W<S$.
In particular, $S_W$ cannot be used as the threshold in the
Palais--Smale condition: doing so would give a threshold
$\frac{1}{N}S_W^{N/2}<\frac{1}{N}S^{N/2}$, which is too low to cover
the energy $\frac{1}{N}S^{N/2}$ of the bubbles. The correct threshold
is $\frac{1}{N}S^{N/2}$, where $S$ is the \emph{scalar} Sobolev
constant.
\end{remark}

\section{The case $N=3$: proof of Theorem \ref{thm:N3}}\label{sec:N3}

\subsection{Failure of $\max\{\lambda,\gamma\}>0$ for $N=3$}\label{sec:N3-motivation}

For $N\ge4$, the deficit in the Sobolev quotient along the test
function $U_\eps$ is of order $\eps$ (if $N>4$) or $\eps|\log\eps|$
(if $N=4$), which is easily made negative under the condition
$\max\{\lambda,\gamma\}>0$
(see Section \ref{sec:N4} and the symmetry \eqref{symmetry}); as a
consequence, the energy threshold condition \eqref{energy-cond} holds.

For $N=3$, the
situation is qualitatively different: the deficit is only of order
$\eps^{1/2}$ (see the proof of Theorem \ref{thm:N3} below), so the
condition $\max\{\lambda,\gamma\}>0$ is no longer sufficient, and
\eqref{energy-cond} requires a stronger linear threshold condition
$\max(\lambda,\gamma)>\lambda_{\mathrm{BN}}$, where
$\lambda_{\mathrm{BN}}$ is as in Definition \ref{def:BN-constant}.

\subsection{The test function and the reduction by symmetry}
\label{sec:N3-test}

For $N=3$ the test function is chosen separately in each case: for $\Omega$ a ball we take $\widetilde{u}:=\phi u_\eps$; for a general
bounded smooth domain no explicit test function is needed, as we invoke
the abstract constant $\lambda^*$ of
\cite[Corollary 1.1]{BrezisNirenberg}.

\begin{remark}[Reduction by symmetry]\label{rem:reduction-symmetry-N3}
Since both the unperturbed and perturbed cases involve the quantity
$\max(\lambda,\gamma)$,  by the symmetry \eqref{symmetry} it
suffices to treat the case $\lambda\ge\gamma$ and use a test function
of the form $(\widetilde{u},0)$; the case $\gamma>\lambda$ follows by exchanging
$(u,v)$ and $(\lambda,\gamma)$ and using $(0,\widetilde{u})$.
\end{remark}

\subsection{Proof of Theorem \ref{thm:N3} ($N=3$,
$F\ge0$)}\label{sec:N3-1}

\begin{proof}[Proof of Theorem \ref{thm:N3}]
We distinguish two cases for the domain $\Omega$.

\smallskip
\textbf{Case 1: $\Omega$ is a ball.}
Let $u_\eps$ be the Aubin--Talenti function from Section
\ref{sec:sobolev} with $N=3$, and let $\phi$ be a fixed smooth radial
function such that $\phi(0)=1$, $\phi'(0)=0$, and $\phi(1)=0$. Set
\begin{equation}\label{def-w-eps-N3}
w_\eps:=\phi u_\eps=\frac{\phi(|x|)}{(\eps+|x|^2)^{1/2}},\quad \eps>0.
\end{equation}

For $N=3$ (so $2^*=6$), the classical estimates
\cite[Lemma 1.3]{BrezisNirenberg} give, as $\eps\to0^+$,
\begin{align}
\int_\Omega|\nabla w_\eps|^2\dd x
&=\frac{\kappa_1}{\eps^{1/2}}
+\omega_3\int_0^1|\phi'(r)|^2dr+O(\eps^{1/2}),
\label{BN-constants}\\
\|w_\eps\|_{L^{6}}^2
&=\frac{\kappa_2}{\eps^{1/2}}+O(\eps^{1/2}),
\label{BN-constants-1}\\
\int_\Omega|w_\eps|^2\dd x
&=\omega_3\int_0^1\phi^2(r)dr+O(\eps^{1/2}),
\label{BN-constants-2}
\end{align}
with $\kappa_1/\kappa_2=S$ and $\omega_3$ the area of the unit sphere
$S^2$.

For the test function $W_\eps:=(w_\eps,0)$, by
\eqref{BN-constants}--\eqref{BN-constants-2},
\begin{equation}\label{eq:quadratic-expansion}
\begin{aligned}
&\int_\Omega|\nabla W_\eps|^2\dd x-\int_\Omega A^*W_\eps\cdot W_\eps\dd x\\
&=\int_\Omega|\nabla w_\eps|^2\dd x-\lambda\int_\Omega|w_\eps|^2\dd x\\
&=\frac{\kappa_1}{\eps^{1/2}}
+\omega_3\int_0^1|\phi'|^2\dd r
-\lambda\omega_3\int_0^1\phi^2\dd r
+O(\eps^{1/2}),
\end{aligned}
\end{equation}
and
\[
\frac{1}{\|w_\eps\|_{L^{6}}^2}
=\frac{\eps^{1/2}}{\kappa_2}\bigl(1+O(\eps)\bigr).
\]
Multiplying,
\begin{equation}\label{BN-quotient}
\begin{aligned}
Q_\eps
&:=\frac{\displaystyle\int_\Omega|\nabla w_\eps|^2\dd x
-\lambda\int_\Omega|w_\eps|^2\dd x}
{\|w_\eps\|_{L^{6}}^2}\\
&=\left(\frac{\kappa_1}{\eps^{1/2}}+\omega_3\int_0^1|\phi'|^2\dd r
-\lambda\omega_3\int_0^1\phi^2\dd r+O(\eps^{1/2})\right)
\cdot\frac{\eps^{1/2}}{\kappa_2}\bigl(1+O(\eps)\bigr)\\
&=S+\eps^{1/2}\cdot\frac{\omega_3}{\kappa_2}
\left[\int_0^1|\phi'(r)|^2\dd r
-\lambda\int_0^1\phi^2(r)\dd r\right]+O(\eps).
\end{aligned}
\end{equation}
Setting
\begin{equation}\label{c1-C-def}
c_1:=\frac{\omega_3}{\kappa_2}\int_0^1|\phi'(r)|^2dr,
\qquad
C:=\frac{\omega_3}{\kappa_2}\int_0^1\phi^2(r)dr,
\end{equation}
we may rewrite \eqref{BN-quotient} as
\begin{equation}\label{BN-quotient-c1C}
Q_\eps=S+\eps^{1/2}(c_1-\lambda C)+O(\eps).
\end{equation}
Choosing $\phi(t)=\cos(\pi t/2)$, a direct computation gives
\begin{equation}\label{lambda4}
\frac{c_1}{C}=\frac{\lambda_1}{4}:=\lambda_{\mathrm{BN}}.
\end{equation}
Hence, if $\lambda>\lambda_{\mathrm{BN}}=\frac{\lambda_1}{4}$, we have
$c_1-\lambda C<0$, and \eqref{BN-quotient-c1C} implies
\begin{equation}\label{Qeps-less-S}
Q_\eps<S
\end{equation}
for $\eps>0$ sufficiently small.

Since $F\ge0$, for $W_\eps:=(w_\eps,0)$ we have
\[
\J(tW_\eps)\le\frac{t^2}{2}
\left(\int_\Omega|\nabla w_\eps|^2\dd x
-\lambda\int_\Omega|w_\eps|^2\dd x\right)
-\frac{t^6}{6}\|w_\eps\|_{L^6}^6
-\int_\Omega F(x,tW_\eps)\dd x,
\]
and since $F\ge0$ the last term is nonpositive, so
\[
\J(tW_\eps)\le\frac{t^2}{2}Q_\eps\|w_\eps\|_{L^6}^2
-\frac{t^6}{6}\|w_\eps\|_{L^6}^6.
\]
The function
$$
t\mapsto\frac{t^2}{2}Q_\eps\|w_\eps\|_{L^6}^2
-\frac{t^6}{6}\|w_\eps\|_{L^6}^6
$$
attains its maximum over $t\ge0$
at $t_\eps=(Q_\eps/\|w_\eps\|_{L^6}^4)^{1/4}$ with value
$\frac13 Q_\eps^{3/2}$; hence
\begin{equation}\label{J-less-S1}
\sup_{t\ge0}\J(tW_\eps)\le\frac13 Q_\eps^{3/2}.
\end{equation}
By \eqref{Qeps-less-S}, \eqref{J-less-S1} and the fact $s\mapsto
s^{3/2}$ is strictly increasing on $(0,\infty)$, we obtain
\[
\sup_{t\ge0}\J(tW_\eps)\le\frac13 Q_\eps^{3/2}
<\frac13 S^{3/2}
\]
for $\eps>0$ sufficiently small and any $\lambda>\lambda_{\mathrm{BN}}=\frac{\lambda_1}{4}$.

\smallskip
\textbf{Case 2: $\Omega$ is a general bounded smooth domain.}
By \cite[Corollary 1.1]{BrezisNirenberg}, there exists a constant
$\lambda^*$ with $0<\lambda^*<\lambda_1$ such that
\[
S_\lambda<S\text{ for }\lambda>\lambda^*,\qquad
S_\lambda=S\text{ for }0\le\lambda\le\lambda^*,
\]
where
\[
S_\lambda:=\inf_{\substack{u\in H_0^1(\Omega)\setminus\{0\}}}
\frac{\|\nabla u\|_{L^2}^2-\lambda\|u\|_{L^2}^2}{\|u\|_{L^6}^2}.
\]
Since $S_\lambda<S$ for every $\lambda>\lambda_{\mathrm{BN}}:=\lambda^*$,
there exists $u_0\in H_0^1(\Omega)\setminus\{0\}$ with
\[
\frac{\|\nabla u_0\|_{L^2}^2-\lambda\|u_0\|_{L^2}^2}{\|u_0\|_{L^6}^2}=:Q(u_0)<S.
\]
For $U_0=(u_0,0)$ and $F\ge0$,
\[
\mathcal{J}(tU_0)\le\frac{t^2}{2}Q(u_0)\|u_0\|_{L^6}^2
-\frac{t^6}{6}\|u_0\|_{L^6}^6.
\]
Thus we obtain
\[
\sup_{t\ge0}\J(tU_0)\le\frac13 Q(u_0)^{3/2}<\frac13 S^{3/2}
\]
for every $\lambda>\lambda_{\mathrm{BN}}:=\lambda^*$.

\smallskip

The case $\gamma\ge\lambda$ is analogous, with the test function
$(0,w_\eps)$ (or $(0,u_0)$) in place of $(w_\eps,0)$ (or $(u_0,0)$).

\smallskip
Hence, in the case $N=3$ and $F\ge0$, the energy threshold
condition \eqref{energy-cond} holds whenever
$\max\{\lambda,\gamma\}>\lambda_{\mathrm{BN}}$, and Theorem \ref{thm:N3} follows from
Theorem \ref{thm:abstract}.
\end{proof}

\smallskip

\smallskip

\smallskip

\noindent{\bf Ethics declaration:} Not applicable.\\
{\bf Conflict of Interest:} The author states that there is no conflict of interest.\\
{\bf Data availability statements:} Data sharing not applicable to this article as no datasets were generated or analysed during the current study.

\end{document}